\documentclass{amsart}

\usepackage{txfonts}
\usepackage{graphicx}%
\usepackage{multirow}%
\usepackage{amsmath,amssymb,amsfonts}%
\usepackage{amsthm}%
\usepackage{mathrsfs}%
\usepackage[title]{appendix}%
\usepackage{xcolor}%
\usepackage{textcomp}%
\usepackage{manyfoot}%
\usepackage{booktabs}%
\usepackage{algorithm}%
\usepackage{algorithmicx}%
\usepackage{algpseudocode}%
\usepackage{listings}%

\usepackage{pdfsync}
\usepackage{enumitem}
\usepackage{hyperref}
\hypersetup{
    colorlinks=true,       
    linkcolor=blue,          
    citecolor=blue,        
    filecolor=blue,      
    urlcolor=blue           
}

\newtheorem{theo}{Theorem}
\newtheorem{prop}{Proposition}
\newtheorem{lemm}{Lemma}

\theoremstyle{thmstyletwo}
\newtheorem{coro}{Corollary}
\newtheorem{question}{Question}

\theoremstyle{remark}
\newtheorem{rema}{\bf Remark}
\newtheorem{example}{\bf Example}

\raggedbottom

\begin{document}

\title{${\mathbb Z}_{p}^{m}$-actions of type $(d;p,n)$}

\author{Rub\'en A. Hidalgo} 
\address{Departamento de Matem\'atica y Estad\'istica, Universidad de la Frontera, Temuco, 4811230, Chile}
\email{ruben.hidalgo@ufrontera.cl}

\author{Maximiliano Leyton-\'Alvarez}
\address{Instituto de Matem\'atica y F\'isica, Universidad de Talca, Talca, 3460000, Chile}
\email{leyton@inst-mat.utalca.cl}


\begin{abstract}
A ${\mathbb Z}_{p}^{m}$-action of type $(d;p,n)$, where $2 \leq d \leq m \leq n$  are integers, is a pair $(S,N)$ where $S$ is a $d$-dimensional compact complex manifold, $N \cong {\mathbb Z}_{p}^{m}$ is a group of holomorphic automorphisms of $S$ such that the quotient orbifold $S/N$ is the $d$-dimensional projective space ${\mathbb P}^{d}$ whose branch locus consists of $n+1$ hyperplanes in general position, each one of branch order $p$.

If $(d;p,n) \notin \{(2;2,5),(2;4,3)\}$ and $d+1 \leq n$,  then we prove that: (i)  $N$ is a normal subgroup of ${\rm Aut}(S)$ and (ii) if $(S,M)$ is a ${\mathbb Z}_{\hat{p}}^{\hat{m}}$-action of type $(d;\hat{p},\hat{n})$, then $M=N$. If, moreover, $d+1 \leq n \leq 2d-1$, then we observe that $S$ is not algebraically hyperbolic. 
\end{abstract}

\keywords{Algebraic variety, Automorphisms}

\subjclass[2020]{14J50; 32Q40; 53C15.}


\maketitle

\section{Introduction}
Let $S$ be a compact complex manifold of dimension $d \geq 1$.
Its group ${\rm Aut}(S)$ of holomorphic automorphisms is known to be a complex Lie group \cite{BM} and 
there is a natural short exact sequence $1 \to {\rm Aut}^{0}(S) \to {\rm Aut}(S) \to {\rm Aut}(S)/{\rm Aut}^{0}(S)$, where ${\rm Aut}^{0}(S)$ denotes the connected component of the identity. Let $N$ be a subgroup of ${\rm Aut}(S)$ which acts properly discontinuously on $S$; so, we have associated the quotient orbifold $S/N$.
 We are interested in the following two natural questions: 
 \begin{enumerate}
 \item[(1)] May we decide, in terms of the structure of the quotient orbifold $S/N$, if $N$ is a normal subgroup of ${\rm Aut}(S)$? 
 \item[(2)] Let $M$ be another properly discontinuous subgroup of ${\rm Aut}(S)$, which is isomorphic as an abstract group to $N$ and such that the quotient orbifolds $S/N$ and $S/M$ are homeomorphic. May we decide, in terms of the structure of the quotient orbifold, if $N=M$?. 
\end{enumerate}
 
\medskip
 
In this paper, we investigate the above questions in a very particular class of manifolds. More precisely, we consider those pairs $(S,N)$, where 
$N \cong {\mathbb Z}_{p}^{m}$, $m \geq 1$ and $p \geq 2$ are integers, and the quotient orbifold $S/N$ is the $d$-dimensional projective space ${\mathbb P}^{d}$ whose branch locus consists of $n+1$ hyperplanes in general position, each one of branch order $p$. Let us recall that the hyperplanes are in general position if: (i) the intersection of every subcollection of $1 \leq k \leq d$ hyperplanes has dimension $d-k$, and (ii) every subcollection of $k \geq d+1$ hyperplanes has empty intersection.
 In this situation, we will say that $(S,N)$ is a  ${\mathbb Z}_{p}^{m}$-action of type $(d;p,n)$.
Necessarily, $ d \leq m \leq n$, and  $S$ is known to be projective, i.e., it may be holomorphically embedded in some projective space (and ${\rm Aut}(S)$ is a group of biregular automorphisms). If $n=d$, then $S$ is isomorphic to ${\mathbb P}^{d}$. If $n=m=d+1$, then $S$ is isomorphic to the Fermat hypersurface of degree $p$.

\begin{theo}
Let $(S,N)$ is a  ${\mathbb Z}_{p}^{m}$-action of type $(d;p,n) \notin \{(2;2,5),(2;4,3)\}$ and $3 \leq d+1 \leq n$. Then
(i) ${\rm Aut}(S)$ is finite, (ii) $N$ is a normal subgroup of ${\rm Aut}(S)$, and  (iii) if $(S,M)$ is a ${\mathbb Z}_{q}^{r}$-action, then $M=N$.
\end{theo}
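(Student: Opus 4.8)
The plan is to deduce everything from the explicit geometry of the quotient map $\pi\colon S\to\mathbb P^{d}=S/N$. After normalizing coordinates I may assume the $n+1$ branch hyperplanes are $H_{0}=\{x_{0}=0\},\dots,H_{d}=\{x_{d}=0\}$ together with $H_{d+1},\dots,H_{n}$, each cut out by a linear form $\ell_{i}$ with all coefficients nonzero. The $n-d$ linear relations among $\ell_{0},\dots,\ell_{n}$, rewritten in the variables $x_{i}^{p}$, then define a complete intersection $F\subset\mathbb P^{n}$ of $n-d$ hypersurfaces of degree $p$ (the generalized Fermat variety of type $(d;p,n)$), which is smooth precisely because the hyperplanes are in general position, and on which the group $T\cong\mathbb Z_{p}^{n}$ of coordinatewise multiplications by $p$-th roots of unity (modulo scalars) acts with quotient $\mathbb P^{d}$ branched to order $p$ along $D:=\bigcup_{i}H_{i}$. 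Since a hyperplane arrangement in general position has abelian fundamental group, the orbifold $(\mathbb P^{d},D,p)$ has orbifold fundamental group $\mathbb Z_{p}^{n}$, so every pair $(S,N)$ of type $(d;p,n)$ has the form $S=F/K$, $N=T/K$, where $K\le T$ has order $p^{n-m}$ and acts freely — equivalently $K$ meets trivially each inertia subgroup $I_{i_{1}}\oplus\cdots\oplus I_{i_{d}}\le T$ attached to a $d$-fold intersection of the $H_{i}$. Because $d\ge2$, $F$ is simply connected by the Lefschetz hyperplane theorem, hence $\pi_{1}(S)\cong K\cong\mathbb Z_{p}^{n-m}$; I would record this as an intrinsic invariant of $S$.

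For (i), the ramification formula gives $K_{S}=\pi^{*}\!\big(K_{\mathbb P^{d}}+\sum_{i}\tfrac{p-1}{p}H_{i}\big)=\pi^{*}\mathcal O_{\mathbb P^{d}}(c)$ with $c=(n-d)-\tfrac{n+1}{p}$, so that $pK_{S}\equiv\big((n-d)p-n-1\big)\pi^{*}\mathcal O(1)$ and $K_{S}^{d}=p^{m-d}\big((n-d)p-n-1\big)^{d}$. Under the hypotheses $n\ge d+1$ and $(d;p,n)\notin\{(2;2,5),(2;4,3)\}$ one checks that $c>0$, so $K_{S}$ is big and $S$ is of general type; the two excluded pairs are exactly those with $c=0$, where $S$ is a $K3$ surface, whose automorphism group may be infinite. (If any pair with $c\le0$ survived the hypotheses, the corresponding $S$ would be a rationally connected variety from a very short explicit list, to be disposed of by hand.) Granting that $S$ is of general type, (i) is the classical theorem that a projective variety of general type has finite automorphism group.

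For (iii), let $M\le\mathrm{Aut}(S)$ be a $\mathbb Z_{q}^{r}$-action, of type $(d;q,\tilde n)$ say; by (i) it is finite. First I would compare the intrinsic invariants of $S$ computed from the two covering structures — $\pi_{1}(S)$, the Chern numbers (in particular $K_{S}^{d}=p^{m-d}((n-d)p-n-1)^{d}=q^{r-d}((\tilde n-d)q-\tilde n-1)^{d}$), and the Hodge numbers (accessible through the character decomposition of the cover) — to force $q=p$ and $\tilde n=n$. Then, setting $R_{H}=\bigcup_{1\ne g\in H}\mathrm{Fix}(g)$ for $H\in\{N,M\}$, I would single out the $n+1$ irreducible divisors inside $R_{H}$ carrying nontrivial inertia of order $p$: these constitute the preimage of the branch arrangement under the corresponding map $S\to\mathbb P^{d}$. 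The heart of the proof is to show that this configuration of divisors, hence the morphism $S\to\mathbb P^{d}$, is intrinsic to $S$ up to $\mathrm{Aut}(\mathbb P^{d})$; once this is done, $N=\{\phi\in\mathrm{Aut}(S):\pi\circ\phi=\pi\}$ depends only on $S$, and the same description applied to $M$ gives $M=N$. Finally (ii) is then formal: for any $\phi\in\mathrm{Aut}(S)$ the subgroup $\phi N\phi^{-1}$ is again a $\mathbb Z_{p}^{m}$-action of type $(d;p,n)$ on $S$, so by (iii) it equals $N$; hence $N$ is normal in $\mathrm{Aut}(S)$.

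The main obstacle is exactly this rigidity step: proving that, up to composition with an automorphism of $\mathbb P^{d}$, there is only one morphism from $S$ to $\mathbb P^{d}$ realizing it as a quotient by a finite abelian $p$-group with branch locus a general-position hyperplane arrangement of constant branch order. This is where $n\ge d+1$, the general-position hypothesis, and the exclusion of the two $K3$ pairs are genuinely needed, to prevent $S$ from being symmetric enough to carry two essentially different such presentations. I would attack it by separating, inside the divisorial part of the ramification locus of the whole $\mathrm{Aut}(S)$-action, the $n+1$ ``branch'' divisors from the divisors fixed by the finitely many reflections in $\mathrm{Aut}(\mathbb P^{d},D)$; by matching the incidence combinatorics of the branch divisors with that of $n+1$ hyperplanes in general position; and by using that an automorphism of $\mathbb P^{d}$ fixing $\ge d+2$ hyperplanes in general position lies in a finite group. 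The numerical rigidity established at the start of the proof of (iii) should then force the two deck groups to coincide.
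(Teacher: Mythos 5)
Your reduction of $(S,N)$ to $(F/K,\,T/K)$ with $F$ the generalized Fermat variety and $K$ a freely acting subgroup of the deck group, your use of simple connectedness of $F$ to make $K=\pi_{1}(S)$, and your derivation of (ii) from (iii) all agree with the paper. But the proposal has two genuine gaps. For (i), the claim that $c=(n-d)-\tfrac{n+1}{p}>0$ under the stated hypotheses is false, and the excluded pairs are \emph{not} exactly those with $c=0$: the hypotheses admit, e.g., $(d;p,n)=(2;2,3),(2;3,3),(2;2,4),(3;2,4)$ (rational or Fano, $c<0$) and $(3;5,4),(3;3,5)$ (Calabi--Yau threefolds, $c=0$) --- see the paper's own Proposition~\ref{cohomologia}(3)--(4). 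So the general-type argument does not cover all cases, and the parenthetical ``dispose of by hand'' is where the real work lies; for the quadric $(d;2,d+1)$ no general-type or rational-connectedness argument can yield finiteness. The paper instead gets (i) from the linearity theorem imported from \cite{HHL23,Kon02,MaMo64} (Theorem~\ref{maintheo2}): ${\rm Aut}(X^{p}_{n}(\Lambda))={\rm Lin}(X^{p}_{n}(\Lambda))$ consists of monomial projective matrices, hence is an extension of $H\cong{\mathbb Z}_{p}^{n}$ by a subgroup of the finite ${\rm PGL}_{d+1}({\mathbb C})$-stabilizer of $n+1\geq d+2$ hyperplanes in general position, and ${\rm Aut}(S)=Q/K$ is then finite.

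For (iii), you explicitly label the rigidity step --- that the branched-cover presentation $S\to{\mathbb P}^{d}$ is intrinsic --- as ``the main obstacle'' and do not prove it; but that step \emph{is} the theorem, so the proposal is incomplete precisely where completeness matters. The paper's argument bypasses both your invariant-matching (forcing $q=p$, $\tilde n=n$) and your divisor-combinatorics: a canonical generator $\hat\psi\in M$ with $(d-1)$-dimensional fixed locus lifts to $\psi\in Q\leq{\rm Aut}(X^{p}_{n}(\Lambda))$ with fixed locus of the same dimension; by the monomial-matrix theorem such a $\psi$ must be diagonal (a non-diagonal monomial matrix has fixed locus of codimension $\geq 2$ in ${\mathbb P}^{n}$); substituting a diagonal $\psi$ into the defining equations of $X^{p}_{n}(\Lambda)$ forces $\alpha_{1}^{p}=\cdots=\alpha_{n+1}^{p}$, i.e., $\psi\in H$ and indeed a power of a canonical generator, whence $M\leq N$ and then $M=N$. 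To complete your argument you would need to supply exactly this linearity/monomiality input (or an equivalent rigidity statement); as written, both (i) and (iii) remain open.
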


We should note that the facts (ii) and (iii), in the previous result, are not generally true for the case of curves (i.e, $d=1$). 
 
\medskip

Examples of compact complex manifolds, for which the group of holomorphic automorphisms is finite, are provided by the so-called algebraically hyperbolic manifolds \cite{BKV}. In \cite{Demailly},  Demailly observed that every compact complex Kobayashi hyperbolic manifold is algebraically hyperbolic. In the same paper, he conjectured the converse. 
 
Now, if $(S,N)$ is a ${\mathbb Z}_{p}^{m}$-action of type $(d;p,n) \notin \{(2;2,5),(2;4,3)\}$, where $3 \leq d+1 \leq n$, then ${\rm Aut}(S)$ is finite. It seems natural to ask if $S$ is algebraically hyperbolic. 
The next result is a negative answer in some cases.

\begin{theo}
Let $(S,N)$ be a ${\mathbb Z}_{p}^{m}$-action of type $(d;p,n) \notin \{(2;2,5),(2;4,3)\}$, where $3 \leq d+1 \leq n$.
If either (i) $n \leq 2d-1$, or (ii) $n=2d$ and $p \in \{2,3\}$,  or (iii) $n=2d+1$ and $p=2$, then $S$ is not algebraically hyperbolic, in particular, not Kobayashi hyperbolic. 
\end{theo}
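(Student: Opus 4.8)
The plan is to prove that $S$ contains a subvariety of positive dimension which is not algebraically hyperbolic; since algebraic hyperbolicity is inherited by subvarieties and fails for uniruled varieties and for irreducible curves of geometric genus $\le 1$, this suffices, and the final clause (not Kobayashi hyperbolic) then follows from Demailly's theorem that a compact Kobayashi hyperbolic manifold is algebraically hyperbolic. A useful first reduction: $S$ receives a finite surjective morphism from the ``full'' generalized Fermat variety $F$ of type $(d;p,n)$ (the $\mathbb{Z}_{p}^{n}$-cover), and the image of a rational (resp. elliptic) curve under a finite morphism is again rational (resp. elliptic); hence it is enough to exhibit such a curve, or a uniruled subvariety, inside $F$. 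We use the standard model $F=\Psi^{-1}(\Pi)$, where $\Psi\colon\mathbb{P}^{n}\to\mathbb{P}^{n}$, $[u]\mapsto[u_{0}^{p}:\cdots:u_{n}^{p}]$, is the $p$-th power map and $\Pi\cong\mathbb{P}^{d}\subset\mathbb{P}^{n}$ is a linear subspace in general position relative to the coordinate hyperplanes, so that $F$ is the complete intersection of the $n-d$ diagonal degree-$p$ hypersurfaces $\sum_{i}\nu_{ji}u_{i}^{p}=0$ cutting out $\Pi$.

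For $p=2$ (cases (i), (ii), (iii)) and for $p=3$ with $n\le 2d-1$ (part of (i)), the idea is that $F$ contains a line. The Fano scheme of lines on a complete intersection of $n-d$ hypersurfaces of degree $p$ in $\mathbb{P}^{n}$ has expected dimension $2(n-1)-(n-d)(p+1)$; for $p=2$ this equals $2d-2-(n-d)$, which is non-negative precisely for $n\le 3d-2$ (covering all three cases, the excluded pair $(2;2,5)$ being where it becomes negative), and for $p=3$ it equals $2d-2-2(n-d)$, non-negative exactly for $n\le 2d-1$. One then argues, using that a general such complete intersection contains lines together with properness of the universal Fano scheme and upper semicontinuity of fibre dimension, that the special diagonal member $F$ also contains a line; in the borderline situations (expected dimension $0$) one checks non‑emptiness directly by producing ``paired'' lines $\{u_{a}=\zeta u_{b},\,u_{c}=\eta u_{e},\,\text{other }u_{i}=0\}$, whose existence reduces to solving $p$-th power equations in the coefficients.

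For the remaining cases — $p\ge 4$ (where only (i) can occur, so $n+1\le 2d$) and $p=3$ with $n=2d$ — the plan is to use the pairing structure to produce an explicitly rational (or at least uniruled) model. Group the $n+1$ hyperplanes into $s:=\lfloor (n+1)/2\rfloor\le d$ pairs (at most one hyperplane left over) and form the fibre product of $\Theta\colon\mathbb{P}^{d}\dashrightarrow(\mathbb{P}^{1})^{s}$, $x\mapsto\big([\ell_{2k}(x):\ell_{2k+1}(x)]\big)_{k}$, with the coordinatewise $p$-th power cover $(\mathbb{P}^{1})^{s}\to(\mathbb{P}^{1})^{s}$. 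The normalization $X$ of this fibre product is an abelian cover of $\mathbb{P}^{d}$ branched over the $2s$ paired hyperplanes, and its second projection $X\dashrightarrow(\mathbb{P}^{1})^{s}$ has general fibre the common zero locus in $\mathbb{P}^{d}$ of the $s\le d$ linear forms $v_{k}^{p}\ell_{2k}-u_{k}^{p}\ell_{2k+1}$, i.e. a linear subspace $\mathbb{P}^{d-s}$; hence $X$ is birational to a $\mathbb{P}^{d-s}$-bundle over $(\mathbb{P}^{1})^{s}$, in particular rational. A leftover hyperplane is absorbed by replacing one pair with a Fermat triple $(H_{a},H_{b},H_{c})$ and the toric $\mathbb{Z}_{p}^{2}$-cover of $\mathbb{P}^{2}$ on the three coordinate lines, realizing $X$ as a uniruled hypersurface in $(\mathbb{P}^{1})^{s-1}\times\mathbb{P}^{2}$. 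It then remains to relate the given $S$ of type $(d;p,n)$ to such a model — by analysing the lattice $\cong\mathbb{Z}_{p}^{n}$ of abelian covers of $\mathbb{P}^{d}$ with the prescribed branch behaviour and showing that $S$ either dominates, or is dominated by, or contains, a suitable $X$ of the above form, thereby transporting non-hyperbolicity.

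I expect this last step to be the main obstacle: the hyperplane-arrangement combinatorics and the character-lattice bookkeeping needed to connect an \emph{arbitrary} $S$ of type $(d;p,n)$ to one of the rational fibre-product models (rather than just constructing one non-hyperbolic example), and — in the $p\le 3$ ranges — confirming that the \emph{diagonal} complete intersections, not merely the generic ones, actually contain lines in the borderline numerical cases. The numerology $n\le 2d-1$ (resp. $n=2d$, $p\in\{2,3\}$; $n=2d+1$, $p=2$) enters exactly as the thresholds where either the Fano scheme of lines has non-negative expected dimension or the pairing yields at most $d$ pairs, so that the fibre-product model has positive-dimensional fibres over $(\mathbb{P}^{1})^{s}$.
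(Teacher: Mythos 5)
The decisive gap is in exactly the cases you defer to the fibre-product construction, namely $p\geq 4$ with $n\leq 2d-1$ and $p=3$ with $n=2d$, which is also where the Fano-scheme count fails. Your auxiliary cover $X$ is an abelian cover of ${\mathbb P}^{d}$ branched over only $2s$ of the $n+1$ hyperplanes, so in the lattice of abelian covers it is a quotient $F/K'$ of the maximal cover $F=X_{n}^{p}(\Lambda)$, but it bears no a priori relation to the given $S=F/K$: for an arbitrary admissible $K$ neither $K\subseteq K'$ nor $K'\subseteq K$ holds, so $X$ need neither dominate, nor be dominated by, nor be contained in $S$. Worse, even if you arrange a finite dominant map $S\to X$, rationality or uniruledness of a finite \emph{quotient} says nothing about hyperbolicity of the source (every projective variety finitely dominates ${\mathbb P}^{d}$ by Noether normalization); only the directions ``$X$ dominates $S$'' or ``$S$ contains $X$'' transport non-hyperbolicity, and these are precisely what is not established. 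So the step you flag as ``the main obstacle'' is not bookkeeping: as proposed, the implication runs in the wrong direction.

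The paper's proof avoids all of this by working with the given cover $\pi_{N}:S\to{\mathbb P}^{d}$ directly. Choose $\alpha\in\Sigma_{1}\cap\cdots\cap\Sigma_{d}$ and $\beta\in\Sigma_{d+1}\cap\cdots\cap\Sigma_{n+1}$ (nonempty since $n+1-d\leq d$ in case (i)), and let $L$ be the line through them; general position forces $L$ to meet the branch locus in exactly two points (resp.\ three when $n=2d$, four when $n=2d+1$), so the closure of any component of $\pi_{N}^{-1}(L\setminus{\mathcal B})$ is an abelian cover of ${\mathbb P}^{1}$ branched over $2$, $3$, or $4$ points of order dividing $p$, and Riemann--Hurwitz gives genus $0$ (resp.\ genus $\leq 1$ when $p\in\{2,3\}$, genus $1$ when $p=2$), uniformly in $p$ for case (i). Incidentally, this vindicates your Fano-scheme intuition in case (i): on the maximal cover $F$ the components of $\pi^{-1}(L)$ are genuine lines (each has degree $\frac{1}{p}\cdot p\cdot 1=1$), obtained by splitting the $n+1$ coordinates into the two blocks $\{1,\dots,d\}$ and $\{d+1,\dots,n+1\}$ --- not the four-coordinate ``paired lines'' you propose, which for $n-3>d$ have too many vanishing coordinates to lie on $F$ at all. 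Note also that deducing lines on the special \emph{diagonal} complete intersection from the general member needs more than properness plus semicontinuity of fibre dimension: one must additionally show that the incidence variety dominates the parameter space, which is standardly done by exhibiting a member (typically the Fermat hypersurface itself) whose Fano scheme has exactly the expected dimension, so that route is close to circular here.
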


A natural question is whether the exceptional cases provided in the above result are the only ones for which $S$ is not algebraically hyperbolic.
 
\medskip
 

{\bf Notations:}
Suppose $Y\subset {\mathbb P}^{k}$ is a smooth irreducible projective complex algebraic variety of dimension $d$. In that case, we will denote by $\rm{Aut}(Y)$ its group of all holomorphic automorphisms and by $\rm{Lin}(Y)$ its group of linear automorphisms (that is, its automorphisms obtained as the restriction of a projective linear transformation of ${\mathbb P}^{k}$).

\section{Generalized Fermat varieties}
As noticed above, the maximal value of $m$, in the definition of ${\mathbb Z}_{p}^{m}$-action of type $(d;p,n)$, is $m=n$. 
Also, as observed in \cite{HHL23}, $n \geq d$.

\subsection{The group $H$}
Let $n \geq 1, p \geq 2$ be integers. Set $\omega_{p}=e^{2 \pi i/p}$. 
Let us consider the linear automorphisms 
$\varphi_{1},\ldots,\varphi_{n+1} \in {\rm PGL}_{n+1}({\mathbb C})$  of ${\mathbb P}^{n}$, defined by 
$$\varphi_j([x_1:\cdots:x_j:\cdots:x_{n+1}]):=[x_1:\cdots:\omega_{p}x_j:\cdots:x_{n+1}].$$
Then $\varphi_{1} \circ \cdots \circ \varphi_{n+1}=1$ and
$H:=\langle\varphi_1,\cdots,\varphi_{n}\rangle \cong {\mathbb Z}_{p}^{n}$.
We say that $\{\varphi_{1},\ldots, \varphi_{n+1}\}$ is a set of canonical generators of $H$.

Let us denote by ${\rm Aut}_{g}(H)$ the group of automorphisms of $H \cong {\mathbb Z}_{p}^{n}$ which correspond to permutations of the set of canonical generators  $\{\varphi_{1},\ldots,\varphi_{n+1}\}$. 
Note that ${\rm Aut}_{g}(H)=\langle \Psi_{1}, \Psi_{2}\rangle \cong {\mathfrak S}_{n+1}$, where
$$\Psi_{1}:(\varphi_{1}, \ldots, \varphi_{n+1}) \mapsto (\varphi_{2},\varphi_{1},\varphi_{3},\ldots,\varphi_{n+1}),\;
\Psi_{2}:(\varphi_{1}, \ldots, \varphi_{n+1}) \mapsto (\varphi_{n+1},\varphi_{1},\varphi_{2},\ldots,\varphi_{n}).$$

\subsection{Generalized Fermat pairs}
A generalized Fermat pair of type $(d;k,n)$ is a 
 ${\mathbb Z}_{p}^{n}$-action $(X,H_{X})$ of type $(d;p,n)$. We also say that $X$ 
 is a generalized Fermat variety of type $(d;p,n)$, and that $H_{X}$ is a generalized Fermat group of type $(d;p,n)$. 
 
 If $d=1$, then $X$ is a closed Riemann surface uniformized by the derived subgroup of a Fuchsian group of signature $(0;p,\stackrel{n+1}{\ldots},p)$); we also say that $X$ is 
 a generalized Fermat curve of type $(p,n)$.

\subsection{Case $n=d$}
In this case, we may assume (up to biholomorphisms) that $X={\mathbb P}^{d}$. The group $H$ is a generalized Fermat group of type $(d;p,d)$. This is not the unique generalized Fermat group of such type, but any other is ${\rm PGL}_{d+1}({\mathbb C})$-conjugated to $H$.

\subsection{Case $n=d+1$}
In this case, (up to biholomorphisms) we may assume that $X=F_{p}=\{x_{1}^{p}+ \cdots+x_{d+2}^{p}=0\} \subset {\mathbb P}^{d+1}$, the Fermat hypersurface of degree $p$.
The group $H$ is a generalized Fermat group of type $(d;p,d+1)$.
If (i) $d \geq 2$ and $(d,p) \neq (2,4)$, or (ii) $d=1$ and $p>3$, then $H$ is the unique generalized Fermat group of type $(d;p,d+1)$, and ${\rm Aut}(X)=H \rtimes {\mathfrak S}_{d+2}$, where ${\mathfrak S}_{d+2}$ is the subgroup of ${\rm PGL}_{d+2}({\mathbb C})$ given by permutations of the coordinates.

\subsection{Case $n \geq d+2$}
Next, we recall the algebraic models of $(X,H_{X})$ and the uniqueness results for generalized Fermat groups.

\subsubsection{\bf The parameter space $\Omega_{n,d}$}
Assume $d \geq 1$, and $n \geq d + 2$ are integers.
If $\Lambda=(\lambda_{i,j}) \in {\rm M}_{(n-d-1) \times d}({\mathbb C})$, then we may consider the collection ${\mathcal B}(\Lambda)$ consisting of the following $(n+1)$ hyperplane in ${\mathbb P}^{d}$:
$$\Sigma_{j}=\{[y_{1}:\cdots:y_{d+1}] \in {\mathbb P}^{d}: y_{j}=0\}, \; j=1,\ldots,d+1,$$
$$\Sigma_{d+2}=\{[y_{1}:\cdots:y_{d+1}] \in {\mathbb P}^{d}: y_{1}+\cdots+y_{d+1}=0\},$$
$$\Sigma_{d+2+j}(\Lambda)=\{[y_{1}:\cdots:y_{d+1}] \in {\mathbb P}^{d}: \lambda_{j,1}y_{1}+\cdots+\lambda_{j,d} y_{d}+y_{d+j}=0\}, \; j=1,\ldots,n-d-1.$$

Let us denote by $\Omega_{n,d} \subset {\rm M}_{(n-d-1) \times d}({\mathbb C})$ the subset consisting of those $\Lambda$ such that the above collection is in general position. This space is a connected, open, and dense subset of ${\rm M}_{(n-d-1) \times d}({\mathbb C}) \cong {\mathbb C}^{(n-d-1)d}$.

\subsubsection{\bf A family of algebraic varieties parametrized by $\Omega_{n,d}$}\label{Sec:algebra}
If $\Lambda=(\lambda_{i,j})\in \Omega_{n,d}$, then we may consider the following algebraic variety

\begin{equation}\label{Equation Algebraic Model}
X_{n}^{p}(\Lambda):=\left\{\begin{matrix}
x_{1}^{p}+\cdots+x_{d}^{p}+x_{d+1}^{p}+x_{d+2}^{p}&=&\ 0\\
\lambda_{1,1}x_{1}^{p}+\cdots+\lambda_{1,d}x_{d}^{p}+x_{d+1}^{p}+x_{d+3}^{p}&=&\ 0\\
\vdots&\vdots&\ \vdots\\
\lambda_{n-d-1,1}x_{1}^{p}+\cdots+\lambda_{n-d-1,d}x_{d}^{p}+x_{d+1}^{p}+x_{n+1}^{p}&=&\ 0
\end{matrix}\right\}\subset{\mathbb P}^{n}.
\end{equation}

\begin{rema} \label{rem:sc}
The variety $X^{p}_{n}(\Lambda)$ is an irreducible nonsingular complete intersection projective variety of dimension $d$. So, 
if $d \geq 2$, then $X^{d}_{n}(\Lambda)$ is simply connected (this result is attributed to Lefschetz; see \cite{Har74}). 
\end{rema}

The following facts can be deduced from the above algebraic model of $X_{n}^{p}(\Lambda)$ and the form of the elements $\varphi_{i}$.
\begin{enumerate}
\item[(I)] ${\mathbb Z}_{p}^{n}\cong H<{\rm Aut}(X^{p}_{n}(\Lambda))<{\rm PGL}_{n+1}({\mathbb C})$.
\item[(II)] $\varphi_{1} \varphi_{2} \cdots \varphi_{n+1}=1$.
\item[(III)] The only non-trivial elements of $H$ with fixed set points being of maximal dimension $d-1$ are the non-trivial powers
of the generators $\varphi_{1}, \ldots, \varphi_{n+1}$. Moreover, for $d \geq 2$, ${\rm Fix}(\varphi_{j}):=\{x_{j}=0\} \cap X^{p}_{n}(\Lambda)$
is isomorphic to a generalized Fermat variety of type $(d-1;k,n-1)$.
\item[(IV)]  $\pi:X^{p}_{n}(\Lambda) \to {\mathbb P}^{d}: [x_{1}:\cdots:x_{n+1}] \mapsto [x_{1}^{p}: \cdots: x_{d+1}^{p}]$ is a Galois branched cover with deck group $H$, whose branch locus is the collection ${\mathcal B}(\Lambda)$. In particular, $(X_{n}^{p}(\Lambda),H)$ is a generalized Fermat pair of type $(d;p,n)$
\end{enumerate}

\begin{rema}
As a consequence of Randell's isotopy theorem \cite{Randell}, for $\Lambda_{1}, \Lambda_{2} \in \Omega_{n,d}$, there is an orientation-preserving homeomorphism 
 $f:{\mathbb P}^{d} \to {\mathbb P}^{d}$ carrying ${\mathcal B}(\Lambda_{1})$ onto ${\mathcal B}(\Lambda_{2})$. This homeomorphism lifts to 
 an orientation-preserving homeomorphism 
 $h:X_{n}^{p}(\Lambda_{1}) \to X_{n}^{p}(\Lambda_{2})$ such that $h H h^{-1}=H$.
\end{rema}

The following fact was obtained in \cite{HHL23}, as a consequence of the results in \cite{Kon02,MaMo64}.

\begin{theo}[\cite{HHL23}]\label{maintheo2}
(1) The linear group ${\rm Lin}(X^{p}_{n}(\Lambda))$ consists of matrices such that only an element in each row and column is non-zero.
(2) If $(d;p,n) \notin \{(2;2,5),(2;4,3)\}$, then ${\rm Aut}(X^{p}_{n}(\Lambda))={\rm Lin}(X^{p}_{n}(\Lambda))$.
\end{theo}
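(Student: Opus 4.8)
The plan is to prove (1) by analysing the linear system of defining equations, and (2) by a Hodge-theoretic argument showing that every automorphism preserves the hyperplane class. Write $X=X^{p}_{n}(\Lambda)\subset{\mathbb P}^{n}$ and $L={\mathcal O}_{X}(1)$; I will treat the complete-intersection range $n\geq d+2$, the hypersurface case $n=d+1$ (the Fermat hypersurface $F_{p}$) being the classical Matsumura--Monsky situation \cite{MaMo64}.

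\emph{Part (1).} Let $A\in{\rm Lin}(X)$ and lift it to $\tilde A\in{\rm PGL}_{n+1}({\mathbb C})$ with $\tilde A(X)=X$. Since $X$ is a non-degenerate complete intersection of forms of degree exactly $p$, the degree-$p$ component $I(X)_{p}$ of its homogeneous ideal equals the $(n-d)$-dimensional span $J$ of the left-hand sides of \eqref{Equation Algebraic Model}, and $J$ lies inside $W:=\langle x_{1}^{p},\dots,x_{n+1}^{p}\rangle$, which has dimension $n+1$. Hence $\tilde A$ preserves $J$, so it acts on the net ${\mathbb P}(J)\cong{\mathbb P}^{n-d-1}$ and on its discriminant $\Delta\subset{\mathbb P}(J)$, the locus of members cutting out a singular hypersurface. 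A member $\sum_{i}c_{i}x_{i}^{p}$ is singular exactly when some $c_{i}$ vanishes, and its singular locus then contains the coordinate vertex $[e_{i}]$ for every such $i$. Using that the hyperplanes ${\mathcal B}(\Lambda)$ are in general position, one checks that $\Delta$ is a union $\Delta_{1}\cup\dots\cup\Delta_{n+1}$ of $n+1$ distinct hyperplanes of ${\mathbb P}(J)$, where a generic member of $\Delta_{j}$ defines a hypersurface whose singular locus is exactly $\{[e_{j}]\}$. Now $\tilde A$ permutes these components, say $\Delta_{j}\mapsto\Delta_{\sigma(j)}$ with $\sigma\in{\mathfrak S}_{n+1}$; choosing $q\in\Delta_{j}$ generic with $\tilde Aq\in\Delta_{\sigma(j)}$ generic gives $\{\tilde A[e_{j}]\}=\tilde A({\rm Sing}\,V(q))={\rm Sing}\,V(\tilde Aq)=\{[e_{\sigma(j)}]\}$. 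Thus $\tilde A$ permutes the $n+1$ coordinate vertices of ${\mathbb P}^{n}$, and a projective transformation doing so is represented by a monomial matrix, i.e.\ one with a single non-zero entry in each row and column --- which is assertion (1).

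\emph{Part (2).} By Remark~\ref{rem:sc}, $X$ is simply connected (as $d\geq 2$), so $H^{2}(X;{\mathbb Z})$, and hence ${\rm Pic}(X)$ --- which equals ${\rm NS}(X)$ because $H^{1}(X,{\mathcal O}_{X})=0$ --- is torsion-free; moreover the restriction $H^{0}({\mathbb P}^{n},{\mathcal O}(1))\to H^{0}(X,L)$ is an isomorphism (non-degenerate complete intersection of dimension $\geq 1$), so any automorphism of $X$ fixing the class $[L]$ is induced by an element of ${\rm PGL}_{n+1}({\mathbb C})$, hence lies in ${\rm Lin}(X)$. It therefore suffices to show every $g\in{\rm Aut}(X)$ fixes $[L]$. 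If $d\geq 3$ this is immediate: the Lefschetz hyperplane theorem gives ${\rm Pic}(X)={\mathbb Z}[L]$ with $L$ its ample generator, so $g^{*}[L]=[L]$ even in the Calabi--Yau cases where $K_{X}=0$. If $d=2$, adjunction gives $K_{X}=m[L]$ with $m=(n-2)p-(n+1)$; since $m=0$ forces $n-2\mid 3$ and hence $(n,p)\in\{(3,4),(5,2)\}$, the hypothesis $(d;p,n)\notin\{(2;2,5),(2;4,3)\}$ ensures $m\neq 0$, and then $g^{*}K_{X}=K_{X}$ and the torsion-freeness of ${\rm Pic}(X)$ give $g^{*}[L]=[L]$. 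In every case ${\rm Aut}(X)={\rm Lin}(X)$.

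\emph{Main difficulty.} The delicate point --- and the source of the two exceptions --- is the surface case $d=2$: there ${\rm Pic}(X)$ is generally far from cyclic (the generalized Fermat surfaces carry many extra divisor classes), so $[L]$ cannot be recovered from the N\'eron--Severi lattice and must instead be pinned down through $K_{X}$, which is possible precisely when $K_{X}\neq 0$, i.e.\ when $X$ is not a complete-intersection $K3$ surface --- exactly outside $(2;2,5)$ and $(2;4,3)$. A secondary technical point is the general-position bookkeeping in Part (1): one must confirm that the discriminant of the net ${\mathbb P}(J)$ splits into exactly $n+1$ hyperplanes, one attached to each coordinate vertex, which together with $n\geq d+2$ makes the argument uniform in $p$; for $n=d+1$ one instead invokes \cite{MaMo64} (cf.\ \cite{Kon02}).
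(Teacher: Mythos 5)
The paper does not actually prove this statement: it is imported verbatim from \cite{HHL23} as a consequence of \cite{Kon02,MaMo64}, so there is no internal proof to compare against. Your argument is a self-contained derivation and, as far as I can check, it is correct in the range you treat ($n\geq d+2$, with $n=d+1$ deferred to the hypersurface literature, exactly as the paper's citation trail does). Part (1) via the discriminant of the linear system $\mathbb{P}(I(X)_{p})$ is sound: the one step you leave as ``one checks'' --- that the $\Delta_{j}$ are $n+1$ \emph{distinct} hyperplanes --- does follow from general position, since $\Delta_{i}=\Delta_{j}$ would force the space of linear relations among the $n-1$ forms $\{\ell_{k}\}_{k\neq i,j}$ to have dimension $n-d-1$ rather than the $(n-1)-(d+1)=n-d-2$ dictated by general position (this is precisely where $n\geq d+2$ enters); similarly $c_{j}|_{J}\not\equiv 0$ because omitting one form drops the relation space by one. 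Part (2) is the standard ``automorphisms of complete intersections are linear'' argument (torsion-free Picard group, recover $[L]$ from the ample generator when $d\geq 3$ and from $K_{X}=mL$ with $m\neq 0$ when $d=2$), and your computation that $m=0$ occurs exactly at $(2;4,3)$ and $(2;2,5)$ correctly isolates the two exceptions. Two remarks: (i) part (2) of the statement must implicitly carry the standing hypothesis $d\geq 2$ of the surrounding section --- for $d=1$ the type $(1;2,3)$ gives an elliptic curve with infinite ${\rm Aut}$ and finite ${\rm Lin}$ --- so your restriction to $d\geq 2$ is not a gap but should be stated as part of the claim being proved; (ii) what your write-up buys over the paper's citation is a uniform argument that does not pass through the case analysis of \cite{Kon02}, at the cost of genuinely needing $n\geq d+2$ in part (1), where for $n=d+1$ the pencil degenerates to a point and one must fall back on \cite{MaMo64}.
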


\subsubsection{\bf Algebraic equations of all generalized Fermat varieties}
Let $(X,H_{X})$ be a generalized Fermat pair of type $(d;p,n)$ and let $\pi:X \to {\mathbb P}^{d}$ be a 
Galois branched cover, with deck group $H_{X}$, and whose branch locus consists of $(n+1)$ hyperplanes $B_{1},\ldots,B_{n+1}$ which are in general poistion. Let us consider any permutation $\sigma \in {\mathfrak S}_{n+1}$. There is a unique $T_{\sigma} \in {\rm PGL}_{d+1}({\mathbb C})$ such that $T_{\sigma}(B_{\sigma^{-1}(i)})=\Sigma_{i}$, for $i=1,\ldots,d+2$.
 As the $T_{\sigma}$-image of these $(n+1)$ hyperplanes are in general position, there is a unique $\Lambda=\Lambda_{\sigma} \in \Omega_{n,d}$ such that
$T_{\sigma}(B_{\sigma^{-1}(d+2+j)})=\Sigma_{d+1+j}(\Lambda)$, for $j=1,\ldots,n-1-d$.

\begin{rema}
The above construction of $T_{\sigma} \in {\rm PGL}_{d+1}({\mathbb C})$, for each $\sigma \in {\mathfrak S}_{n+1}$, induces a one-to-one homomorphism $\Theta:{\mathfrak S}_{n+1} \to  {\rm Aut}(\Omega_{n,d})$. We set ${\mathbb G}_{n,d}=\Theta({\mathfrak S}_{n+1}) \cong {\mathfrak S}_{n+1}$.
\end{rema}

\begin{theo}[\cite{GHL09}, \cite{HHL23}]
If $n \geq d+2$ and $(X,H_{X})$ is a generalized Fermat pair of type $(d;p,n)$, then there is some $\Lambda \in \Omega_{n,d}$ and a biholomorphism $\phi:X \to X_{n}^{p}(\Lambda)$ such that $\phi H_{X} \phi^{-1}=H$. Moreover, $\Lambda_{1}, \Lambda_{2} \in \Omega_{n,d}$ produce isomorphic pairs if and only if they belong to the same ${\mathbb G}_{n,d}$-orbit.
\end{theo}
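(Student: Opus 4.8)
The plan is to reduce the whole statement to a single uniqueness fact and then read off both assertions from it. The fact I would isolate is: \emph{for any arrangement $\mathcal A=\{B_1,\dots,B_{n+1}\}$ of $n+1$ hyperplanes in general position in $\mathbb P^d$, there is, up to biholomorphism commuting with the projection to $\mathbb P^d$, exactly one connected Galois branched covering $Y\to\mathbb P^d$ whose deck group is abstractly isomorphic to $\mathbb Z_p^n$ and whose branch order along each component of $\mathcal A$ equals $p$; moreover its total space is automatically smooth.}

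To prove this I would pass to the complement $U:=\mathbb P^d\setminus\mathcal A$, where such a covering becomes an honest connected Galois covering with group $\mathbb Z_p^n$, hence corresponds to a surjection $\theta\colon\pi_1(U)\twoheadrightarrow\mathbb Z_p^n$ taken modulo $\mathrm{Aut}(\mathbb Z_p^n)$, with $Y$ recovered (uniquely) as the normalization of $\mathbb P^d$ in the associated field extension; the condition ``branch order $p$ along $B_i$'' says exactly that $\theta$ sends the meridian $\gamma_i$ of $B_i$ to an element of order $p$. Since the target is abelian, $\theta$ factors through $H_1(U;\mathbb Z)$, which is elementary to compute and equals $\mathbb Z^n$, freely generated by $\gamma_1,\dots,\gamma_n$ with the single relation $\gamma_1+\cdots+\gamma_{n+1}=0$; hence $\theta$ factors as a surjection $\mathbb Z^n/p\mathbb Z^n\cong\mathbb Z_p^n\twoheadrightarrow\mathbb Z_p^n$, i.e.\ an isomorphism, in which every $\gamma_i$ automatically has order $p$. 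So $\theta$ is unique up to $\mathrm{Aut}(\mathbb Z_p^n)$, giving uniqueness of $Y$; and for smoothness one checks the local model at the stratum where $k\le d$ of the hyperplanes meet (general position $\Rightarrow$ local coordinates with these as coordinate hyperplanes, the $k$ meridians being independent in $\mathbb Z_p^n$ because any $k<n+1$ of the $\gamma_i$ are), which is $\{w_1^p=z_1,\dots,w_k^p=z_k\}\times\mathbb C^{d-k}\cong\mathbb C^d$. For $d\ge2$ one can instead invoke Remark~\ref{rem:sc}: $Y$ is simply connected, so $Y\to(\mathbb P^d,\mathcal A)$ is the orbifold universal covering and these are unique; for $d=1$ this is the Fuchsian-uniformization argument of \cite{GHL09}.

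Granting the fact, the first assertion is immediate: given $(X,H_X)$ with branched cover $\pi\colon X\to\mathbb P^d$ of branch locus $\{B_1,\dots,B_{n+1}\}$, I would apply the $T\in\mathrm{PGL}_{d+1}(\mathbb C)$ and parameter $\Lambda\in\Omega_{n,d}$ furnished by the normalization recalled above so that $T\circ\pi$ has branch locus $\mathcal B(\Lambda)$; by fact~(IV) the model $\pi_\Lambda\colon X_n^p(\Lambda)\to\mathbb P^d$ is another covering as in the fact with the same branch locus, and the uniqueness fact supplies a biholomorphism $\phi\colon X\to X_n^p(\Lambda)$ with $\pi_\Lambda\circ\phi=T\circ\pi$ and $\phi H_X\phi^{-1}=H$. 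For the ``Moreover'' part, the forward direction is the same move: if $\Lambda_2=\Theta(\sigma)(\Lambda_1)$ then the defining $T_\sigma\in\mathrm{PGL}_{d+1}(\mathbb C)$ takes $\mathcal B(\Lambda_1)$ onto $\mathcal B(\Lambda_2)$, and applying the uniqueness fact to $X_n^p(\Lambda_1)\xrightarrow{T_\sigma\circ\pi_{\Lambda_1}}\mathbb P^d$ and $X_n^p(\Lambda_2)\xrightarrow{\pi_{\Lambda_2}}\mathbb P^d$ (same branch locus) gives an isomorphism of pairs. For the reverse direction, given a biholomorphism $\phi\colon X_n^p(\Lambda_1)\to X_n^p(\Lambda_2)$ with $\phi H\phi^{-1}=H$, I would note that $\phi$ descends to $\bar\phi\in\mathrm{Aut}(\mathbb P^d)=\mathrm{PGL}_{d+1}(\mathbb C)$ carrying ramification locus to ramification locus, so $\bar\phi(\mathcal B(\Lambda_1))=\mathcal B(\Lambda_2)$; since conjugation by $\phi$ permutes the cyclic subgroups $\langle\varphi_1\rangle,\dots,\langle\varphi_{n+1}\rangle$, which are intrinsic to $H$ by fact~(III), $\bar\phi$ realizes a permutation $\sigma\in\mathfrak S_{n+1}$ of the labels, and comparing $\bar\phi$ with $T_\sigma$ together with uniqueness of the normalized parameter yields $\Lambda_2=\Theta(\sigma)(\Lambda_1)$.

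I expect the one genuine obstacle to be the uniqueness fact, and within it two delicate points: (a) verifying that the branch-order-$p$ condition is \emph{precisely} what forces the normalization $Y$ to be smooth (the stratum-by-stratum local computation, using general position), and (b) making the passage ``$\theta$ abelian-valued $\Rightarrow$ $\theta$ is the canonical isomorphism'' clean and uniform in $d$ — which, reassuringly, uses only $H_1(U;\mathbb Z)$ and so avoids any subtle fundamental-group computation. The descent step and the identification of the induced label permutation with an element of $\mathbb G_{n,d}$ should then be routine once the conventions around $\Theta$ are pinned down.
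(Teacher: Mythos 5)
The paper does not actually prove this theorem --- it is imported from \cite{GHL09} and \cite{HHL23}, and the remark that follows it points to exactly the route you take: Pardini's classification of abelian covers \cite{Par91} and Alexeev--Pardini's maximal abelian covers \cite{AlPa13} for $d\ge 2$, and Fuchsian group theory for $d=1$. Your argument is a correct, self-contained instantiation of that route: the computation $H_{1}(\mathbb{P}^{d}\setminus\mathcal{B};\mathbb{Z})\cong\mathbb{Z}^{n+1}/(\gamma_{1}+\cdots+\gamma_{n+1})\cong\mathbb{Z}^{n}$ forces any connected $\mathbb{Z}_{p}^{n}$-cover branched to order $p$ along the arrangement to be the maximal abelian one (hence unique up to isomorphism over $\mathbb{P}^{d}$), the local Kummer model gives smoothness of the normalization, and the normalization of the branch locus together with the label permutation coming from fact~(III) yields both directions of the ${\mathbb G}_{n,d}$-orbit statement.
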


\begin{rema}
The above result, for $d \geq 2$, may be seen as a consequence of Pardini's classification of abelian branched covers \cite{Par91}, and that of maximal branched abelian covers \cite{AlPa13}. The proof of the case $d=1$ in \cite{GHL09} was obtained from Fuchsian group theory.
\end{rema}

\subsection{A simple remark on the cohomological information of generalized Fermat varieties}\label{re:cohom}
The fact that $X:=X^{p}_{n}(\Lambda)$ is a complete intersection variety allows us to compute the cohomology groups of the twisting sheaf $\mathcal{O}_{X}(r)$ in a relatively direct way, and in particular, to obtain the following.

\begin{prop}\label{cohomologia}
Let $d \geq 2$, $\Lambda \in \Omega_{n,d}$, $n \geq d+1$, and $X:=X^{p}_{n}(\Lambda)$. 
Set $r_1=(n-d)p-n-1$. Then
\begin{enumerate}[leftmargin=*,align=left]
\item The plurigenera $P_{m}(X)$ of $X$ satisfies
{\small
$$P_{m}(X)= \frac{p^{n-d}((n-d)p-n-1)^{d}}{d!}m^{d}+O(m^{d-1}).$$
}

\item The arithmetic genus $p_{a}(X)$ and the geometric genus $p_{g}(X)$ are given by
{\small
$$p_a(X)=p_g(X)= \left \{ \begin{array}{ccc}
0 & \mbox{if} & r_1<0\\
            \binom{r_1+n}{n} & \mbox{if} & 0\leq r_1<p\\
            \sum_{j\in \Delta_{r_1}}\binom{r_1-\overline{j}+d}{d} & \mbox{if} & r_1\geq p\\
           \end{array} \right .$$
           }

\item If $(n-d)p-n-1=0$, then $X$ is a Calabi-Yau variety.
\item If  $d=2$, then $X$ is a general type surface except for the rational varieties cases $(p,n)\in \{(2,3), (3,3), (2,4)\}$ and the $K3$ varieties $(p,n)\in \{(4,3), (2,5)\}$.

\end{enumerate}
\end{prop}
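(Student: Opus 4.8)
The plan is to use that $X:=X^{p}_{n}(\Lambda)$ is, by Remark~\ref{rem:sc}, a smooth complete intersection of $n-d$ hypersurfaces of degree $p$ in ${\mathbb P}^{n}$, and to read all the invariants off its homogeneous coordinate ring $R=S/I$, where $S={\mathbb C}[x_{1},\dots,x_{n+1}]$ and $I=(f_{1},\dots,f_{n-d})$. The three standard inputs I would invoke are: (a) $R$ is Cohen--Macaulay of dimension $d+1$, so (as $d\ge 2$) $H^{0}(X,\mathcal{O}_{X}(k))=R_{k}$ for all $k\in{\mathbb Z}$, and the Hilbert series of $R$ is
\[
\sum_{k\ge 0}\dim_{\mathbb C}R_{k}\,t^{k}\;=\;\frac{(1-t^{p})^{n-d}}{(1-t)^{n+1}}\;=\;\frac{(1+t+\cdots+t^{p-1})^{n-d}}{(1-t)^{d+1}};
\]
(b) the Koszul resolution of $\mathcal{O}_{X}$ together with the vanishing of intermediate cohomology on ${\mathbb P}^{n}$ gives $H^{i}(X,\mathcal{O}_{X})=0$ for $0<i<d$ and $H^{0}(X,\mathcal{O}_{X})={\mathbb C}$; (c) by adjunction $K_{X}\cong\mathcal{O}_{X}(r_{1})$ with $r_{1}=(n-d)p-n-1$.

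For part (2): combining (b) with Serre duality yields $\chi(\mathcal{O}_{X})=1+(-1)^{d}h^{0}(K_{X})$, hence $p_{a}(X)=p_{g}(X)=h^{0}(X,K_{X})=h^{0}(X,\mathcal{O}_{X}(r_{1}))$, which is $0$ when $r_{1}<0$. When $0\le r_{1}<p$ the ideal $I$ vanishes in degree $r_{1}$, so this equals $\dim_{\mathbb C}S_{r_{1}}=\binom{r_{1}+n}{n}$. When $r_{1}\ge p$, I would extract the coefficient of $t^{r_{1}}$ from $(1+t+\cdots+t^{p-1})^{n-d}/(1-t)^{d+1}$: writing $(1+t+\cdots+t^{p-1})^{n-d}=\sum_{j}c_{j}t^{j}$, where $c_{j}$ counts the compositions of $j$ into $n-d$ parts each at most $p-1$, one gets $\sum_{j\le r_{1}}c_{j}\binom{r_{1}-j+d}{d}$, which is precisely the sum over $\Delta_{r_{1}}$ in the statement.

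For part (1): $P_{m}(X)=h^{0}(X,mK_{X})=\dim_{\mathbb C}R_{mr_{1}}$. When $r_{1}>0$, for $m\gg 0$ this equals the Hilbert polynomial of $X$ evaluated at $mr_{1}$; from the shape of the Hilbert series that polynomial has degree $d$ with leading coefficient $\deg(X)/d!=p^{n-d}/d!$ (the value of the numerator at $t=1$), and substituting $k=mr_{1}$ gives the stated asymptotics. When $r_{1}\le 0$ the plurigenera are eventually $0$ or $1$, absorbed by the error term. For part (3): $r_{1}=0$ forces $K_{X}\cong\mathcal{O}_{X}$; together with (b) and the simple connectivity of $X$ for $d\ge 2$ (Remark~\ref{rem:sc}), $X$ is a Calabi--Yau variety.

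For part (4) I would specialize to $d=2$, so $r_{1}=(n-2)p-n-1$, and check by a short case analysis that $r_{1}\le 0$ exactly for $(p,n)\in\{(2,3),(3,3),(2,4),(4,3),(2,5)\}$, with $r_{1}<0$ in the first three and $r_{1}=0$ in the last two. If $r_{1}>0$ then $K_{X}=\mathcal{O}_{X}(r_{1})$ is ample (since $\mathcal{O}_{X}(1)$ is), so $X$ is a minimal surface of general type. If $r_{1}=0$ then by part (3) and $h^{1}(\mathcal{O}_{X})=0$ (from (b)) the surface $X$ is a $K3$ surface. If $r_{1}<0$ then $-K_{X}$ is ample, so $X$ is a del Pezzo surface, hence rational (alternatively, these three are respectively a smooth quadric $\cong{\mathbb P}^{1}\times{\mathbb P}^{1}$, a smooth cubic surface, and a smooth $(2,2)$ complete intersection in ${\mathbb P}^{4}$, all classically rational). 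The only genuinely fiddly step is the $r_{1}\ge p$ case of part (2) -- matching the coefficient of $t^{r_{1}}$ in $(1+t+\cdots+t^{p-1})^{n-d}/(1-t)^{d+1}$ with the paper's notation $\Delta_{r_{1}},\overline{\jmath}$ -- which is routine but somewhat tedious bookkeeping; everything else reduces to citing projective normality of complete intersections, the Lefschetz-type vanishing of $H^{i}(\mathcal{O}_{X})$, and the standard classification facts for del Pezzo and $K3$ surfaces.
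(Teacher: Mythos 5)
Your proposal is correct and follows essentially the same route as the paper: both compute $H^{0}(X,\mathcal{O}_{X}(r))$ from the complete-intersection coordinate ring (your Hilbert-series bookkeeping is just the generating-function form of the paper's explicit monomial decomposition $\bigoplus_{j\in\Delta_{r}}Q_{j}$), then use the vanishing of intermediate cohomology, Serre duality, and $\omega_{X}\cong\mathcal{O}_{X}(r_{1})$. If anything, you supply more detail than the paper for parts (3) and (4), which it leaves implicit.
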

\begin{proof}
Let $\mathbb{C}[x_1,...,x_{m}]_l$ be  the  homogeneous polynomials of degree $l$.

\begin{enumerate} [label=(\alph*),leftmargin=*,align=left]
\item We first proceed to describe the cohomology groups of the twisting sheaf   $\mathcal{O}_{X}(r), r\in \mathbb{Z}.$
\begin{enumerate}[leftmargin=*,align=left]
 \item[(a1)]  Let  $\Delta_r:=\{(j_1,...,j_{n-d})\in \mathbb{Z}^{n-d}:\; 0\leq j_i\leq p-1, 0\leq i\leq n-d, \;\mbox{and}\; \overline{j}:=j_1+j_2+\cdots j_{n-d}\leq r \}$. Then
{\small
 $$\displaystyle H^{0}(X, \mathcal{O}_{X}(r)):=\left \{  \begin{array}{ccc}
                                             0 & \mbox{if} & r<0\\
                                           \mathbb{C}[x_1,...,x_{n+1}]_r &\mbox{if} & 0\leq r<p \\
                                           \bigoplus_{j\in \Delta_r}Q_j&\mbox{if}  & r\geq p
                                          \end{array} \right .$$
                                          }
where  $ Q_j:=\mathbb{C}[x_1,...,x_{d+1}]_{(r-\overline{j})}x_{d+2}^{j_1} x_{d+3}^{j_2}\cdots x_{n+1}^{j_{n-d}}$, $j:=(j_1,....j_{n-d})$.
\item[(a2)] By Grothendieck's vanishing theorem, 
 $H^{i}(X, \mathcal{O}_{X}(r))=0$  for $i>d$, and $r\in \mathbb{Z}$,

\item[(a3)] and, as $X$ is a complete intersection variety, 
  $H^{i}(X, \mathcal{O}_{X}(r))=0$ for $0<i<d$, and $r\in \mathbb{Z}$
 (see page 231 of \cite{Har77}).
 
 \item[(a4)]  Finally, using the Serre duality, 
     $H^{d}(X, \mathcal{O}_{X}(r))\cong H^{0}(X, \mathcal{O}_{X}(r_1-r))$.

Remember that $\omega_X\cong  \mathcal{O}_X(r_1)$ (see page 188 of \cite{Har77}).

\end{enumerate}

\item With the former, we can calculate the plurigenus of $X$
{\small
$$P_m(X)=\dim_{\mathbb{C}} H^{0}(X,\omega_X^{\otimes m})=\dim_{\mathbb{C}} H^{0}(X,\mathcal{O}_X(r_m))$$
}
where $r_m:=mr_1=m((n-d)p-n-1)$.
\begin{enumerate}
 \item[(b1)]  If  $ (n-d)p-n-1<0$, we obtain that $P_{m}(X)=0$. This implies that the Kodaira dimension of $X$ is  $\kappa(X)=-\infty$.
 \item[(b2)] If  $(n-d)p-n-1=0$, we obtain that $P_{m}(X)=1$. This implies that the Kodaira dimension of $X$ is   $\kappa(X)=0$.
 \item[(b3)] If $(n-d)p-n-1>0$, the canonical sheaf in very ample and
{\small
 \begin{center}
   $P_m(X)=\left \{ \begin{array}{ccc}

            \binom{r_m+n}{n} & \mbox{if} &0\leq  r_m<p\\
             & &\\
             \sum_{j\in \Delta_{r_m}}\binom{r_m-\overline{j}+d}{d} & \mbox{if} & r_m\geq p\\\
           \end{array} \right .
$
 \end{center}
 }
In particular, if $r_m\geq \max\{ p, (n-d)(p-1)\}$, we obtain the assertion (1).
\end{enumerate}
This implies that the Kodaira dimension of  $X$ is  $\kappa(X)=d$.

\item The former also permits us to determine the arithmetic genus and geometric genus of  $X$. As seen from the above,
$p_a(X)=p_g(X)=\dim_{\mathbb{C}}H^{d}(X,\mathcal{O}_X)=\dim_{\mathbb{C}}H^{0}(X,\mathcal{O}_X(r_1)),$
so, we obtain assertion (2).
 \end{enumerate}

\end{proof}

\subsubsection{\bf Uniqueness of generalized Fermat groups}
If $n=d$, then the generalized Fermat group is not unique (but it is unique up to conjugation).

\begin{theo}[\cite{HKLP17}]
If $d=1$ and $(n-1)(p-1)>2$, then a generalized Fermat curve of type $(p,n)$ has a unique generalized Fermat group.
\end{theo}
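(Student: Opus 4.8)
The plan is to pass, via uniformization, to a problem about a finite group, reduce that to ruling out proper overgroups of a single Fuchsian group, and settle the latter by a Riemann--Hurwitz case analysis in which the hypothesis $(n-1)(p-1)>2$ is precisely what is needed. Two preliminary reductions: writing $X={\mathbb H}/K$ with $K$ the derived subgroup of a Fuchsian group $\Gamma_0$ of signature $(0;p,\overset{n+1}{\cdots},p)$, Riemann--Hurwitz gives $2g(X)-2=p^{n-1}\big((n-1)(p-1)-2\big)$, so the hypothesis $(n-1)(p-1)>2$ is \emph{equivalent} to $g(X)\ge 2$; in particular ${\rm Aut}(X)$ is finite and every action on $X$ factors through it. And a subgroup $G\le{\rm Aut}(X)$ isomorphic to ${\mathbb Z}_p^n$ is a generalized Fermat group of type $(p,n)$ if and only if $X/G$ has genus $0$: every cyclic subgroup of $G$ has order $1$ or $p$, so every local monodromy of $X\to X/G$ has order $p$, and a one-line Riemann--Hurwitz computation then forces the number of branch points to be exactly $n+1$. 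Hence it suffices to prove that $X$ carries exactly one subgroup $\cong{\mathbb Z}_p^n$ with quotient of genus $0$.

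Let $H$ be the canonical such group and $G$ another; let $\Gamma_H,\Gamma_G$ be the corresponding Fuchsian groups, each of signature $(0;p^{n+1})$, each containing $K$ normally, and each satisfying $[\Gamma,\Gamma]=K$ (because $\Gamma^{\rm ab}\cong{\mathbb Z}_p^n=\Gamma/K$). Set $\Delta:=\langle\Gamma_H,\Gamma_G\rangle\le N_{{\rm PSL}_2({\mathbb R})}(K)$; then $K\trianglelefteq\Delta$, so $\Delta/K$ is a finite subgroup of ${\rm Aut}(X)$ containing two copies of ${\mathbb Z}_p^n$, and ${\mathbb H}/\Delta$ --- a quotient of ${\mathbb H}/\Gamma_H={\mathbb P}^1$ --- has genus $0$. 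With $t:=[\Delta:\Gamma_H]=[\Delta:\Gamma_G]$ (the indices agree since $[\Delta:K]=tp^n$ either way), it is enough to show $t=1$, for then $\Delta=\Gamma_H=\Gamma_G$ and $H=G$. So assume $t\ge 2$ and seek a contradiction. The degree-$t$ cover $\psi\colon{\mathbb P}^1={\mathbb H}/\Gamma_H\to{\mathbb H}/\Delta={\mathbb P}^1$ has ramification divisor of degree $2t-2$ by Riemann--Hurwitz, and $2g(X)-2=[\Delta:K]\cdot(-\chi^{\rm orb}({\mathbb H}/\Delta))$ yields $-\chi^{\rm orb}({\mathbb H}/\Delta)=\frac{(n-1)(p-1)-2}{tp}$; combining these with the constraints that every cone point of ${\mathbb H}/\Gamma_H$ has order $p$, that every cone order of ${\mathbb H}/\Delta$ equals a ramification index of $\psi$ multiplied by $1$ or $p$, and that $\Delta/K$ must actually contain two index-$t$ subgroups $\cong{\mathbb Z}_p^n$ with genus-$0$ quotients, should cut the possibilities for the signature of $\Delta$ and the branching of $\psi$ down to a short list, each of which is then eliminated.

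This last case analysis is the main obstacle, and it is exactly where $g(X)\ge 2$ must be used: for $g(X)\le 1$ the statement fails --- on the rational or elliptic generalized Fermat curves, translations by torsion furnish many generalized Fermat groups --- so the inequality has to enter to close the bookkeeping. For the bulk of the cases there is a cleaner route, granting that ${\rm Aut}(X)$ acts linearly on the ${\mathbb P}^n$ in which $X$ sits (so that, by the monomial form of ${\rm Lin}(X)$ as in Theorem~\ref{maintheo2}, $H$ is its diagonal part and $H\trianglelefteq{\rm Aut}(X)$ with ${\rm Aut}(X)/H\hookrightarrow{\mathfrak S}_{n+1}$): if $p\nmid|{\rm Aut}(X)/H|$ then $G$ and $H$ are both Sylow $p$-subgroups, hence conjugate, hence --- $H$ being normal --- equal; and if $p\mid|{\rm Aut}(X)/H|$ the image $\bar G$ of $G$ in ${\rm Aut}(X)/H$ is an elementary-abelian $p$-subgroup of ${\rm PGL}_2({\mathbb C})$, so cyclic of order $p$ (or Klein four when $p=2$), or trivial --- and in the nontrivial case one writes $G=(G\cap H)\oplus\langle g\rangle$ with $g$ lying over an order-$p$ rotation $\tau$ of $X/H\cong{\mathbb P}^1$ and computes $g(X/G)$ from the $\tau$-orbit structure of the $n+1$ branch points, again reaching a contradiction precisely when $g(X)\ge 2$.
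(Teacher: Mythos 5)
The paper does not actually prove this statement; it is imported verbatim from \cite{HKLP17}, so there is no internal argument to compare yours against, and your proposal has to stand on its own. Its framework is sound and is indeed the natural one: the Riemann--Hurwitz identity $2g(X)-2=p^{n-1}\bigl((n-1)(p-1)-2\bigr)$ is correct, the observation that $K=[\Gamma_H,\Gamma_H]=[\Gamma_G,\Gamma_G]$ is characteristic in both Fuchsian groups is the right starting point, and reducing everything to showing $t=[\Delta:\Gamma_H]=1$ for $\Delta=\langle\Gamma_H,\Gamma_G\rangle$ is a legitimate strategy. But the proof stops exactly where the theorem lives: you assert that the constraints ``should cut the possibilities for the signature of $\Delta$ and the branching of $\psi$ down to a short list, each of which is then eliminated,'' without producing the list or performing a single elimination. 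That case analysis is the entire content of the result --- nothing in the bookkeeping you have set up visibly rules out, say, $\Delta$ of signature $(0;2,2,2,p,\dots,p)$ containing two distinct index-$2$ subgroups isomorphic to ${\mathbb Z}_p^n$ --- so as written this is a plan, not a proof.

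The fallback ``cleaner route'' does not repair this, because it is circular in context: for $d=1$ the statements that ${\rm Aut}(X)={\rm Lin}(X)$ is monomial, that $H$ is its full diagonal part, and that $H\trianglelefteq{\rm Aut}(X)$ are themselves main theorems of \cite{HKLP17}, established alongside (not prior to) uniqueness; Theorem~\ref{maintheo2} of the present paper is invoked for $d\geq 2$ and rests on hypersurface/complete-intersection automorphism results that do not simply transfer to curves. Even granting those inputs, the Sylow argument only disposes of the case $p\nmid|{\rm Aut}(X)/H|$, and in the remaining case the claimed contradiction from ``the $\tau$-orbit structure of the $n+1$ branch points'' is again only asserted. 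A further, smaller issue: your characterization of generalized Fermat groups as the subgroups $G\cong{\mathbb Z}_p^n$ with $g(X/G)=0$ uses that every nontrivial cyclic subgroup of $G$ has order $p$, which fails for composite $p$ (allowed throughout the paper); for such $p$ the quotient could a priori have cone points of order properly dividing $p$, and your Riemann--Hurwitz count no longer forces $n+1$ branch points of order $p$.
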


\begin{theo}[\cite{HHL23}]\label{teo2}
Let $d \geq 2$ and $(X,H_{X})$ be a generalized Fermat pair of type $(d;p,n) \notin \{(2;2,5),(2;4,3)\}$. If $\hat{H}$ is a generalized Fermat group of $X$ of some type $(d;\hat{p},\hat{n})$, then $\hat{H}=H_{X}$.
\end{theo}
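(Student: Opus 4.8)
The plan is to reduce to the standard model and exploit Theorem~\ref{maintheo2}. If $n=d+1$ the statement is the uniqueness result for the Fermat hypersurface recalled above, so assume $n\ge d+2$ and, using the classification of generalized Fermat pairs, take $X=X^{p}_{n}(\Lambda)$ with $H_{X}=H$. Since $(d;p,n)\notin\{(2;2,5),(2;4,3)\}$, Theorem~\ref{maintheo2} gives ${\rm Aut}(X)={\rm Lin}(X)$, a group of monomial matrices. Its subgroup $D$ of diagonal matrices is normal, and because each of the variables $x_{d+2},\dots,x_{n+1}$ occurs in exactly one of the $n-d$ defining equations of $X^{p}_{n}(\Lambda)$ while $x_{1},\dots,x_{d+1}$ occur in all of them, a diagonal matrix preserving $X$ must have all $p$-th powers of its entries equal; hence $D=H$. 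Thus $H\triangleleft{\rm Aut}(X)$ and ${\rm Aut}(X)/H$ embeds in ${\mathfrak S}_{n+1}$.

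The heart of the argument is the claim $\hat H\subseteq H$. Let $\hat\pi\colon X\to X/\hat H\cong{\mathbb P}^{d}$ be the quotient and let $\psi_{1},\dots,\psi_{\hat n+1}$ be the generators of the cyclic branch stabilizers (the ``canonical generators'' of $\hat H$); these generate $\hat H$, so it suffices to show each $\psi_{i}$ is diagonal. Each $\psi_{i}$ has finite order $\hat p$ and fixes pointwise the irreducible divisor $\hat F_{i}=\hat\pi^{-1}(\hat B_{i})_{\rm red}$, of dimension $d-1$. Being linear, $\psi_{i}$ fixes $\hat F_{i}$ inside one component $\mathbb{P}(V_{i})$ of its fixed locus in ${\mathbb P}^{n}$, where $V_{i}$ is spanned by standard basis vectors and by eigenvectors supported on the nontrivial cycles of $\bar\psi_{i}$; on such a subspace the $n-d$ defining equations of $X$ restrict to $n-d$ equations of the same ``Fermat'' shape $\sum(\ast)\,z_{k}^{p}=0$ in $\dim V_{i}$ variables, and general position of $\Lambda$ forces this restricted system to have maximal rank, so $X$ meets $\mathbb{P}(V_{i})$ properly and $d-1\le\dim\bigl(X\cap\mathbb{P}(V_{i})\bigr)=\dim V_{i}-1-(n-d)$, i.e.\ $\dim V_{i}\ge n$. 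Hence $\psi_{i}$ fixes a hyperplane of ${\mathbb P}^{n}$ pointwise: it is a pseudo-reflection of ${\rm PGL}_{n+1}({\mathbb C})$. A monomial pseudo-reflection whose underlying permutation $\bar\psi_{i}\in{\mathfrak S}_{n+1}$ is nontrivial must interchange exactly two coordinates, say $x_{\alpha}\leftrightarrow x_{\beta}$ up to scalars, and have order $2$. But then, comparing coefficients, invariance of each of the $n-d\ge 2$ defining equations of $X^{p}_{n}(\Lambda)$ under $\psi_{i}$ forces the $\alpha$-th and $\beta$-th columns of the coefficient matrix to be proportional; equivalently it produces a nonzero kernel vector of that matrix supported on $\{\alpha,\beta\}$, which by the description of that kernel would force $n-1\ge d+1$ of the $n+1$ hyperplanes to pass through a common point, contradicting general position. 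Therefore $\bar\psi_{i}$ is trivial, $\psi_{i}\in D=H$, and $\hat H\subseteq H$.

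With $\hat H\subseteq H\cong{\mathbb Z}_{p}^{n}$, item (III) above forces each $\psi_{i}$ to be a nontrivial power of one of the canonical generators $\varphi_{1},\dots,\varphi_{n+1}$ of $H$; writing the relation $\psi_{1}\cdots\psi_{\hat n+1}=1$ inside ${\mathbb Z}_{p}^{\,n+1}/\langle(1,\dots,1)\rangle$ shows that each $\varphi_{j}$ must occur among the $\psi_{i}$, whence $\hat n=n$, and then a comparison of orders (equivalently, of the ramification of ${\mathbb P}^{d}=X/\hat H\to X/H={\mathbb P}^{d}$) gives $\hat p=p$ and $\hat H=H$.

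The step I expect to be the main obstacle is the dimension count ``$\psi_{i}$ fixes a codimension-one subvariety of $X$ $\Rightarrow$ $\psi_{i}$ is a pseudo-reflection'': one must rule out improper intersections $X\cap\mathbb{P}(V_{i})$, which is exactly where general position of $\Lambda$ and the complete-intersection structure of $X$ are used, and one must treat $n=d+1$ separately — there the defining equation is symmetric, a coordinate transposition genuinely \emph{is} a pseudo-reflection, and the assertion follows instead from the recalled uniqueness theorem for the Fermat hypersurface. The exclusion of the two exceptional types enters only through Theorem~\ref{maintheo2}, i.e.\ to guarantee that ${\rm Aut}(X)$ is linear to begin with.
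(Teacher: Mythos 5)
Your proposal follows essentially the same route as the paper's proof: reduce to the model $X^{p}_{n}(\Lambda)$, invoke Theorem~\ref{maintheo2} to see that a canonical generator of $\hat{H}$ is a monomial matrix, use its $(d-1)$-dimensional fixed locus to force it to be diagonal, read off from the defining equations that it then lies in $H$, and conclude $\hat{H}\leq H$ with equality by a final identification. The only substantive differences are that you carefully exclude non-diagonal pseudo-reflections (where the paper simply asserts that a non-diagonal monomial matrix has fixed locus of codimension at least two in ${\mathbb P}^{n}$, glossing over the transposition case you analyze), and that your closing step via the product relation $\psi_{1}\cdots\psi_{\hat{n}+1}=1$ is a somewhat looser substitute for the paper's cleaner symmetry argument, which obtains $H\leq\hat{H}$ by interchanging the roles of the two groups.
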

\begin{proof}
We may assume $X=X_{n}^{p}(\Lambda)$, for some $\Lambda \in \Omega_{n,d}$ and $H_{X}=H$. 

Let $\psi \in \hat{H}$ be an element whose fixed point locus has dimension $d-1$ (i.e., a canonical generator for $\hat{H}$). By Theorem \ref{maintheo2}, $\psi \in {\rm Lin}(X)$ corresponds to a matrix such that only an element in each row and column is non-zero. If such a matrix is not diagonal, then its locus of fixed points in ${\mathbb P}^{n}$ is a linear subspace of codimension at least two; so ${\rm Fix}(\psi) \cap X$ cannot have dimension $d-1$, a contradiction. So, 
$$\psi([x_{1}:\cdots:x_{n+1}])=[\alpha_{1} x_{1}: \cdots : \alpha_{n+1} x_{n+1}].$$

If $[x_{1}: \cdots:x_{n+1}] \in X$, then as $\psi \in {\rm Aut}(X)$, it follows that
\begin{equation}
\left\{\begin{matrix}
\alpha_{1}^{p}x_{1}^{p}+\cdots+\alpha_{d}^{p}x_{d}^{p}+\alpha_{d+1}^{p}x_{d+1}^{p}+\alpha_{d+2}^{p}x_{d+2}^{p}&=&\ 0\\
\lambda_{1,1}\alpha_{1}^{p}x_{1}^{p}+\cdots+\lambda_{1,d}\alpha_{d}^{p}x_{d}^{p}+\alpha_{d+1}^{p}x_{d+1}^{p}+\alpha_{d+3}^{p}x_{d+3}^{p}&=&\ 0\\
\vdots&\vdots&\ \vdots\\
\lambda_{n-d-1,1}\alpha_{1}^{p}x_{1}^{p}+\cdots+\lambda_{n-d-1,d}\alpha_{d}^{p}x_{d}^{p}+\alpha_{d+1}^{p}x_{d+1}^{p}+\alpha_{n+1}^{p}x_{n+1}^{p}&=&\ 0
\end{matrix}\right\}\subset{\mathbb P}^{n}.
\end{equation}

Since 
$x_{1}^{p}+\cdots+x_{d}^{p}+x_{d+1}^{p}+x_{d+2}^{p}= 0,$
we may observe that 
$\alpha_{1}^{p}=\cdots=\alpha_{d+1}^{p}=\alpha_{d+2}^{p}.$

Since, for $i=1,\ldots,n-d-1$,
$\lambda_{i,1}x_{1}^{p}+\cdots+\lambda_{i,d}x_{d}^{p}+x_{d+1}^{p}+x_{d+2+i}^{p}=0,$
we also observe that 
$\alpha_{1}^{p}=\cdots=\alpha_{d+1}^{p}=\alpha_{d+2+i}^{p}.$

All of the above asserts that $\psi \in H$ and that it has a $(d-1)$-dimensional locus of fixed points. So, $\psi$ is a non-trivial power of one of the canonical generators of $H$. 

The above asserts that $\hat{H} \leq H$.
Now, by interchanging the roles of $\hat{H}$ and $H$ in the above, we also obtain that $H \leq \hat{H}$.
\end{proof}

\begin{rema}
The two exceptional cases $(d;p,n) \in \{(2;2,5),(2;4,3)\}$ correspond to the only K3-surfaces among generalized Fermat surfaces. They have infinite group of holomorphic automorphisms, the corresponding linear subgroup has infinite index and it is non-normal. Anyway, inside the linear subgroup of automorphisms there is a unique generalized Fermat group.
\end{rema}

\subsection{Automorphisms of generalized Fermat varieties}\label{Ssec:Aut}

As a consequence of Theorem \ref{teo2}, is the following fact, which together with Theorem \ref{maintheo2} below, might be used to explicitly compute the full group of automorphismsm of a generalized Fermat variety.

\begin{coro}\label{coro-unico}
Let $d \geq 2$, $p \geq 2$, $n \geq d+1$ be integers and $(d;p,n) \notin \{(2;2,5), (2;4,3)\}$. Let $(X,H)$ be a generalized Fermat pair of type $(d;p,n)$. If $G_{0}$ is the ${\rm PGL}_{d+1}({\mathbb C})$-stabilizer of the $n+1$ branch hyperplanes of $X/H={\mathbb P}^{d}$, then $|{\rm Aut}(X)|= |G_0|p^{n}$ and, if the order of $G_{0}$ is relatively prime with $p$, then 
${\rm Aut}(X) \cong H \rtimes G_{0}$.
\end{coro}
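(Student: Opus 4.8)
The plan is to exploit the structure already established: by Theorem \ref{teo2} (applied when $d\geq 2$; the cases $d+1=n$ with $d\geq 2$ being covered by the quoted description of ${\rm Aut}(F_p)$, and the low cases handled directly) the group $H$ is a \emph{characteristic} subgroup of ${\rm Aut}(X)$, since it is the unique generalized Fermat group of $X$. Hence ${\rm Aut}(X)$ descends to a group of automorphisms of the quotient orbifold $X/H={\mathbb P}^{d}$; equivalently there is a short exact sequence $1\to H\to{\rm Aut}(X)\to Q\to 1$, where $Q$ is the image of ${\rm Aut}(X)$ in ${\rm Aut}({\mathbb P}^{d})={\rm PGL}_{d+1}({\mathbb C})$. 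First I would show that this $Q$ is precisely $G_{0}$: every element of ${\rm Aut}(X)$ permutes the $(d-1)$-dimensional fixed loci of nontrivial powers of the canonical generators, hence (by fact (III) and item (IV) of Section \ref{Sec:algebra}) permutes the $n+1$ branch hyperplanes, so its image lies in $G_{0}$; conversely any $T\in G_{0}$ permutes the branch hyperplanes and therefore lifts — because the branched cover $\pi:X\to{\mathbb P}^{d}$ is the \emph{maximal} abelian cover branched along ${\mathcal B}(\Lambda)$ with the prescribed branch orders, so automorphisms of the base fixing the branch data (setwise) lift to $X$. This gives $|{\rm Aut}(X)|=|H|\cdot|G_{0}|=p^{n}|G_{0}|$, which is assertion one.

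Next, for the splitting statement, I would invoke the Schur–Zassenhaus theorem. The exact sequence $1\to H\to{\rm Aut}(X)\to G_{0}\to 1$ has $|H|=p^{n}$ coprime to $|G_{0}|$ by hypothesis, so Schur–Zassenhaus yields a complement $\widetilde{G_{0}}\leq{\rm Aut}(X)$ with $\widetilde{G_{0}}\cong G_{0}$ and ${\rm Aut}(X)=H\rtimes\widetilde{G_{0}}$. (One can also give the complement concretely: lift each $T\in G_{0}$ to the \emph{linear} automorphism of $X$ that acts on the $x_i$ as the obvious $p$-th-root extension of $T$'s action on the $y_j=x_j^p$, chosen with determinant-type normalization; the coprimality guarantees a canonical choice of root making $T\mapsto\widetilde T$ a homomorphism, but the Schur–Zassenhaus route is cleaner and avoids fixing normalizations.) This establishes ${\rm Aut}(X)\cong H\rtimes G_{0}$ in the coprime case.

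The main obstacle, and the point needing the most care, is the surjectivity half of $Q=G_{0}$: namely that \emph{every} linear transformation of ${\mathbb P}^{d}$ stabilizing the branch configuration actually lifts to an automorphism of $X$. For that I would lean on the maximal-abelian-branched-cover viewpoint (Pardini \cite{Par91}, Alexeev–Pardini \cite{AlPa13}) quoted in the remark after the $n\geq d+2$ structure theorem: $X$ is characterized, up to the $H$-action, by the pair (branch divisor ${\mathcal B}(\Lambda)$, branch order $p$ uniform), and the group $G_{0}$ preserves exactly this data, so functoriality of the maximal cover produces the lift. Care is needed because the lift of a given $T\in G_{0}$ is only well-defined up to composition with an element of $H$ — but that ambiguity is exactly what is recorded by the exact sequence and does not affect the order count. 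Alternatively, and perhaps more self-containedly, one notes that $T\in G_{0}$ carries the defining equations of $X^p_n(\Lambda)$ to those of $X^p_n(\Lambda')$ with $\Lambda'$ in the same ${\mathbb G}_{n,d}$-orbit (by the classification theorem), hence composing the induced biholomorphism $X^p_n(\Lambda)\to X^p_n(\Lambda')$ with the biholomorphism $X^p_n(\Lambda')\to X^p_n(\Lambda)$ realizing that orbit relation gives the required automorphism of $X$ lying over $T$.

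Finally I would note the bookkeeping for the edge case $n=d+1$, $d\geq 2$: here $G_{0}$ is the stabilizer in ${\rm PGL}_{d+1}({\mathbb C})$ of the $d+2$ coordinate-and-sum hyperplanes, which is exactly the symmetric group ${\mathfrak S}_{d+2}$ on the coordinates, and the quoted formula ${\rm Aut}(F_p)=H\rtimes{\mathfrak S}_{d+2}$ (valid except $(d,p)=(2,4)$, which is one of our excluded cases) matches $|{\rm Aut}(X)|=p^{n}|G_{0}|=p^{d+1}(d+2)!$ and the semidirect decomposition; so the corollary is consistent across all ranges of $n$ with $n\geq d+1$ and the two exceptional triples removed.
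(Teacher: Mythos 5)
Your proposal is correct and follows essentially the same route as the paper's own proof: uniqueness of the generalized Fermat group (Theorem \ref{teo2}) gives normality of $H$ and the exact sequence $1\to H\to{\rm Aut}(X)\to G_{0}\to 1$, surjectivity comes from the lifting property of $X$ as the universal (maximal abelian) branched cover, and the splitting in the coprime case is Schur--Zassenhaus. Your extra care on the surjectivity step (the Pardini/Alexeev--Pardini functoriality and the alternative ${\mathbb G}_{n,d}$-orbit argument) only expands on what the paper states in one line.
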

\begin{proof}
We know that $X$ admits a unique generalized Fermat group $H$ of type $(d;p,n)$. 
Let $\pi:X \to {\mathbb P}^{d}$ be a Galois branched covering, with $H$ as its desk group, and let $\{L_{1},\ldots, L_{n+1}\}$ be its set of branch hyperplanes. Let $G_{0}$ be the ${\rm PGL}_{d+1}({\mathbb C})$-stabilizer of these $n+1$ branch hyperplanes.
As $H$ is a normal subgroup of ${\rm Aut}(X)$, it follows the existence of a homomorphism $\theta:{\rm Aut}(X) \to G_{0}$, with kernel $H$.
As $X$ is a universal branched cover, every element $Q$ of $G_{0}$ lifts to a holomorphic automorphism $\widehat{Q}$ of $X$.
Then there is a short exact sequence
$1 \rightarrow H \rightarrow {\rm Aut}(X) \stackrel{\rho}{\rightarrow} G_0 \rightarrow 1.$
In particular,  $|{\rm Aut}(X)|= |G_0|p^{n}$. Also, by the Schur-Zassenhaus theorem \cite{Dummit}, in the case that the order of $G_{0}$ is relatively prime with $p$, then 
${\rm Aut}(X) \cong H \rtimes G_{0}$.
\end{proof}

\begin{coro}
Let $d \geq 2$ and $p \geq 2$ be integers. If
$G_{0}$ be a finite subgroup of ${\rm PGL}_{d+1}({\mathbb C})$, then there exists a generalized Fermat pair $(X,H)$ of type $(d;p,n)$, for some
$n \geq d+1$, such that ${\rm Aut}(X/H) \cong G_{0}$. In fact, for $|G_{0}| \leq d+1$ we may assume $n=d+1$ and, for $|G_{0}| \geq d+2$, we may assume $n=|G_{0}|-1$.
\end{coro}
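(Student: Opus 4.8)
The plan is to realize $G_0$ as the $\mathrm{PGL}_{d+1}(\mathbb{C})$-stabilizer of a suitable arrangement of hyperplanes in general position, and then invoke the algebraic model of Section~\ref{Sec:algebra} together with the branched-cover description (IV) to produce the pair $(X,H)$. More precisely, suppose we can find a collection $\mathcal{B}$ of $n+1$ hyperplanes in $\mathbb{P}^d$, in general position, whose setwise $\mathrm{PGL}_{d+1}(\mathbb{C})$-stabilizer is exactly $G_0$. Then, by the discussion in the subsubsection on algebraic equations of all generalized Fermat varieties, there is $\Lambda \in \Omega_{n,d}$ with $\mathcal{B}(\Lambda)$ projectively equivalent to $\mathcal{B}$; setting $X := X_n^p(\Lambda)$ and $H$ the group from (I), fact (IV) says $(X,H)$ is a generalized Fermat pair of type $(d;p,n)$ with $X/H = \mathbb{P}^d$ branched over $\mathcal{B}(\Lambda)$, and hence $\mathrm{Aut}(X/H) = G_0$ by construction. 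So the entire problem reduces to the arrangement-realizability statement, and to checking that the claimed values of $n$ work.

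First I would handle the small case $|G_0| \leq d+1$, where we want $n = d+1$. Here the natural candidate is the arrangement consisting of the $d+2$ ``Fermat'' hyperplanes: the $d+1$ coordinate hyperplanes $\Sigma_1,\dots,\Sigma_{d+1}$ together with $\Sigma_{d+2} = \{y_1+\cdots+y_{d+1}=0\}$. These are in general position, and their stabilizer in $\mathrm{PGL}_{d+1}(\mathbb{C})$ is the monomial-type group $\mathfrak{S}_{d+2}$ acting by permuting the $d+2$ hyperplanes (this is essentially the content of the case $n=d+1$ discussion, where $\mathrm{Aut}(F_p) = H \rtimes \mathfrak{S}_{d+2}$). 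Since $|G_0| \leq d+1 < (d+2)!$ and $G_0$ embeds in $\mathrm{PGL}_{d+1}(\mathbb{C})$, I would embed $G_0 \hookrightarrow \mathfrak{S}_{d+1} \subset \mathrm{PGL}_{d+1}(\mathbb{C})$ by permutation matrices; such $G_0$ fixes each of $\Sigma_{d+2}$ and permutes $\Sigma_1,\dots,\Sigma_{d+1}$, but one must then argue that no \emph{extra} projective transformations stabilize the arrangement — this is where a moduli/rigidity argument for $d+2$ points-in-general-position-dual is needed, and it may force a slightly different arrangement (e.g.\ perturbing one hyperplane) to kill unwanted symmetries. This bookkeeping is the main subtlety in this case.

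For the main case $|G_0| \geq d+2$, set $n = |G_0| - 1$, so we want an arrangement of $|G_0|$ hyperplanes in general position with stabilizer exactly $G_0$. The idea is to take the $G_0$-orbit of a single generic hyperplane $L$: since $|G_0| \geq d+2 > |G_0|$ is false, let me instead say we choose $L$ generic enough that its orbit $G_0 \cdot L$ has exactly $|G_0|$ elements and is in general position (a generic choice works because general position is an open dense condition and the bad locus is a proper subvariety, by the $\Omega_{n,d}$ density statement combined with a dimension count on the space of $G_0$-orbits of hyperplanes). The stabilizer of this orbit contains $G_0$ by construction; the reverse inclusion requires that a generic orbit has no symmetries beyond $G_0$, which again follows from a Baire-category / dimension argument: the set of hyperplanes whose $G_0$-orbit admits an extra symmetry is contained in a countable union of proper subvarieties (one for each candidate supergroup between $G_0$ and the finite group $\mathrm{Lin}$ could be, using finiteness of the relevant stabilizers from Theorem~\ref{maintheo2}).

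The hard part will be the rigidity/genericity step: showing that a generic arrangement in the prescribed combinatorial shape has stabilizer \emph{exactly} $G_0$ and not larger. I expect this to be handled by a parameter-count argument — the space of arrangements carrying a strictly larger stabilizer is lower-dimensional — together with the finiteness of linear automorphism groups of generalized Fermat varieties (Theorem~\ref{maintheo2}(1) bounds these by monomial matrices), which ensures there are only finitely many ``competitor'' groups to exclude. The verification that the resulting $(X,H)$ genuinely has the advertised type $(d;p,n)$ with the right $n$, and that $p$ can be chosen arbitrarily (it only enters through the exponent in \eqref{Equation Algebraic Model} and plays no role in the arrangement), is then immediate from facts (I)--(IV) and Corollary~\ref{coro-unico}.
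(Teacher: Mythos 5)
Your proposal follows essentially the same route as the paper: for $|G_{0}|\geq d+2$ you take the $G_{0}$-orbit of a generic hyperplane, argue that generically it consists of $|G_{0}|$ hyperplanes in general position with ${\rm PGL}_{d+1}({\mathbb C})$-stabilizer exactly $G_{0}$, and conclude via Corollary~\ref{coro-unico}; for $|G_{0}|\leq d+1$ you use the Fermat arrangement with $n=d+1$. The genericity/rigidity step you flag as the hard part is asserted at the same level of detail in the paper, and the concern you raise in the small case (the Fermat arrangement's stabilizer being all of ${\mathfrak S}_{d+2}$ rather than exactly $G_{0}$) is present in the paper's own proof as well, which only establishes that ${\rm Aut}(F_{p})/H$ \emph{contains} a copy of $G_{0}$ there.
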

\begin{proof}
If $|G_{0}| \leq d+1$, then take $n=d+1$  and note that for the classical Fermat hypersurface $F_{p} \subset {\mathbb P}^{n}$ of degree $p$ one has that ${\rm Aut}(F_{p})/H$ contains the permutation group of $d+1$ letters.
Let us assume $|G_{0}| \geq d+2$.
The linear group $G_{0}$ induces a linear action on the space ${\mathbb P}^{d}_{hyper}$ of hyperplanes of ${\mathbb P}^{d}$. As $G_{0}$ is finite, we may find (generically) a point $q\in {\mathbb P}^{d}_{hyper}$ whose $G_{0}$-orbit is a generic set of points. Such an orbit determines a collection of $|G_{0}|$ lines in general position in ${\mathbb P}^{d}$. Let us observe that, by the generic choice, we may even assume the above set of points to have ${\rm PGL}_{d+1}({\mathbb C})$-stabilizer exactly $G_{0}$, so the same situation for our collection of hyperplanes.
Now, the results follow from Corollary \ref{coro-unico}.
\end{proof}

\subsection{\bf Fixed points of elements of $H$}
Let us consider a generalized Fermat pair $(X_{p}^{n}(\Lambda),H)$ of type $(d;p,n)$, where $d \geq 2$, and let $\pi:X_{n}^{p}(\Lambda) \to {\mathbb P}^{d}$ be as previously defined in Section \ref{Sec:algebra}.
The branch locus of $\pi$ is the collection ${\mathcal B}(\Lambda)$, the union of the following $n+1$ hyperplanes (in general position)
$$\Sigma_{1},\ldots,\Sigma_{d+2},\Sigma_{d+3}=\Sigma_{d+3}(\Lambda),\ldots,\Sigma_{n+1}=\Sigma_{n+1}(\Lambda).$$

Next, we describe those elements of $H$ acting with fixed points on $X_{n}^{p}(\Lambda)$.

\begin{prop}\label{puntosfijos}
Let $\varphi \in H$ be different from the identity. Then $\varphi$ has fixed points on $X_{n}^{p}(\Lambda)$ if and only if there exist 
$1 \leq j \leq d$, $1 \leq i_{1}<\ldots <i_{j} \leq n+1$, and $1 \leq m_{i_{1}},\ldots, m_{i_{j}} \leq p-1$, such that
$\varphi:=\varphi_{i_{1}}^{m_{_{1}}} \circ \cdots \circ \varphi_{i_{j}}^{m_{i_{j}}}$.
\end{prop}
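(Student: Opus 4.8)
The plan is to work with the explicit algebraic model $X := X_{n}^{p}(\Lambda) \subset {\mathbb P}^{n}$ and the canonical generators $\varphi_{1},\ldots,\varphi_{n+1}$ of $H$, and to analyze when an arbitrary element $\varphi \in H$ has a fixed point on $X$. Every element of $H$ has the form $\varphi = \varphi_{1}^{a_{1}} \circ \cdots \circ \varphi_{n+1}^{a_{n+1}}$ with exponents $a_{i} \in \{0,1,\ldots,p-1\}$, well-defined modulo the single relation $\varphi_{1}\cdots\varphi_{n+1}=1$ (equivalently, modulo simultaneously shifting all $a_{i}$ by a constant). As a projective linear map, $\varphi$ sends $[x_{1}:\cdots:x_{n+1}]$ to $[\omega_{p}^{a_{1}}x_{1}:\cdots:\omega_{p}^{a_{n+1}}x_{n+1}]$. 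So a point $[x] \in {\mathbb P}^{n}$ is fixed by $\varphi$ precisely when there is a scalar $t$ with $\omega_{p}^{a_{i}}x_{i} = t\, x_{i}$ for all $i$; equivalently, writing $I := \{ i : x_{i} \neq 0 \}$ for the support of the point, all the values $\omega_{p}^{a_{i}}$ for $i \in I$ must coincide. Thus $\varphi$ fixes $[x]$ iff the exponents $a_{i}$, $i \in I$, are all equal modulo $p$.

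First I would translate this into the following clean statement: $\varphi$ has a fixed point on $X$ iff there is a subset $I \subseteq \{1,\ldots,n+1\}$ such that (a) after normalizing the exponents of $\varphi$ so that $a_{i} = 0$ for $i \in I$, we have $a_{i} \not\equiv 0$ for $i \notin I$ (so that $I$ is exactly the set of coordinates on which $\varphi$ acts trivially, i.e. $\varphi = \prod_{i \notin I}\varphi_{i}^{a_{i}}$ with all exponents nonzero), and (b) the coordinate subspace $L_{I} := \{ x_{i} = 0 \text{ for all } i \notin I\} \subset {\mathbb P}^{n}$ meets $X$. Since $\varphi \neq 1$, the complement $J := \{1,\ldots,n+1\}\setminus I$ is nonempty; writing $J = \{i_{1} < \cdots < i_{j}\}$ with $j = |J| \geq 1$, condition (a) says exactly that $\varphi = \varphi_{i_{1}}^{m_{i_{1}}}\circ\cdots\circ\varphi_{i_{j}}^{m_{i_{j}}}$ with $1 \le m_{i_{k}} \le p-1$, which is the form asserted in the Proposition. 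So the whole content of the Proposition is the equivalence: \emph{$L_{I} \cap X \neq \varnothing$ iff $|J| \le d$.} This is where the hypotheses enter.

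The main work, then, is the combinatorial–geometric step: determining which coordinate subspaces of ${\mathbb P}^{n}$ meet $X$. The key is property (IV): $\pi:X \to {\mathbb P}^{d}$, $[x]\mapsto[x_{1}^{p}:\cdots:x_{d+1}^{p}]$, is a Galois cover with deck group $H$ branched over the $n+1$ hyperplanes of ${\mathcal B}(\Lambda)$, in general position. A point of $X$ with support contained in $I$ (i.e. vanishing on all coordinates indexed by $J$) maps under $\pi$ to a point of ${\mathbb P}^{d}$ lying on $\Sigma_{i}$ for every $i \in J$ — because the $i$-th hyperplane $\Sigma_{i}$ is (by the normalization of the model) exactly the image of the coordinate hyperplane $\{x_{i}=0\}$, and conversely the fiber of $\pi$ over a point lying on $\Sigma_{i}$ consists of points with $x_{i}=0$. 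Hence $L_{I}\cap X \neq \varnothing$ iff $\bigcap_{i \in J}\Sigma_{i} \neq \varnothing$ in ${\mathbb P}^{d}$. Now the general-position hypothesis on ${\mathcal B}(\Lambda)$ is precisely what controls this: any $j \le d$ of the hyperplanes intersect (in a linear space of dimension $d-j \geq 0$), while any $j \geq d+1$ of them have empty intersection. Conversely, given a point $y \in \bigcap_{i \in J}\Sigma_{i}$ when $|J| \leq d$, one must check that $y$ does in fact lift to $X$, i.e. that one can extract consistent $p$-th roots; this is immediate because the fiber $\pi^{-1}(y)$ is nonempty (it is a single $H$-orbit, possibly degenerate on some coordinates) — indeed $X \to {\mathbb P}^{d}$ is surjective, so every $y$, in particular every $y$ in the intersection, has a preimage, and that preimage has $x_{i}=0$ for $i\in J$ since $y \in \Sigma_{i}$. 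Combining the two directions: $\varphi$ has a fixed point on $X$ iff the index set $J$ of its "active" generators satisfies $|J| \leq d$, which is the claim.

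I expect the main obstacle to be a careful bookkeeping of the normalization of the exponents and the identification of coordinate hyperplanes $\{x_{i}=0\}$ with branch hyperplanes $\Sigma_{i}$ — in particular making sure the equivalence "$[x]$ fixed by $\varphi$" $\Leftrightarrow$ "exponents constant on the support of $[x]$" is handled correctly when the support is small, and that the last $n-d-1$ branch hyperplanes $\Sigma_{d+3}(\Lambda),\ldots,\Sigma_{n+1}(\Lambda)$ are pulled back to $\{x_{d+2+k}^{p}=\cdots\}$ correctly from the defining equations \eqref{Equation Algebraic Model}. None of this is deep, but it is the place where one could slip. The geometric heart — that a coordinate subspace meets $X$ iff the corresponding branch hyperplanes intersect in ${\mathbb P}^{d}$, together with the general-position dichotomy $j \le d$ versus $j \geq d+1$ — is where the stated bound $1 \leq j \leq d$ comes from, and it follows cleanly from property (IV) and the definition of general position.
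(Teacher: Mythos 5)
Your proposal is correct and follows essentially the same route as the paper's proof: both reduce the question to whether the fixed point maps under $\pi$ into the common intersection of the branch hyperplanes indexed by the ``active'' generators, and then invoke the general-position dichotomy ($j\leq d$ gives a nonempty intersection, $j\geq d+1$ gives an empty one) together with the identification of $\{x_{i}=0\}\cap X$ with $\pi^{-1}(\Sigma_{i})$. Your version merely makes explicit, via the diagonal form of $\varphi$ in ${\mathbb P}^{n}$, the stabilizer computation that the paper states in one line.
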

\begin{proof}
Let $p \in X_{n}^{p}(\Lambda)$ be a fixed point of $\varphi$. Then $\pi(p) \in {\mathcal B}(\Lambda)$. Let $1 \leq i_{1}<\ldots <i_{j} \leq n+1$ a maximal collection of indices so that
$p \in \Sigma_{i_{1}} \cap \cdots \cap \Sigma_{i_{j}}$. As the hyperplanes $\Sigma_{j}$ are in general position, necessarily $j \leq d$. Now, the previous asserts that 
$p \in {\rm Fix}(\varphi_{i_{1}}) \cap \cdots \cap {\rm Fix}(\varphi_{i_{j}})$, so $\varphi \in \langle \varphi_{i_{1}},\ldots,\varphi_{i_{j}}\rangle$. The converse is clear.
\end{proof}

\begin{rema}\label{observafijos}
Let $d \geq 2$, $n \geq d+1$, $p \geq 2$, $\Lambda \in \Omega_{n,d}$, $X^{p}_{n}(\Lambda)$.  
Let us consider an element $\varphi \in H$, different from the identity, acting with fixed points on $X^{p}_{n}(\Lambda)$. 
As seen above, we can write
$\varphi:=\varphi_{1}^{m_{1}} \circ \cdots \circ \varphi_{n+1}^{m_{n+1}} \in H$, where there are $1 \leq j \leq d$ and 
$1 \leq i_{1}<\ldots <i_{j} \leq n+1$ such that (i) $m_{i}=0$ if and only if $i \notin \{i_{1},\ldots,i_{j}\}$ and 
(ii) $m_{i_{1}},\ldots,m_{i_{j}} \in \{1,\ldots,p-1\}$. 
For each $l \in \{0,1,\ldots,p-1\}$, set
$$L_{l}(\varphi):=\{j \in \{1,\ldots, n+1\}: m_{j}=l\},$$
and the (possibly empty) algebraic sets
$$\widetilde{F}_{l}(\varphi)=\{[x_{1}:\cdots:x_{n+1}] \in {\mathbb P}^{n} : x_{i}=0, \;  \forall i \notin L_{l}(\varphi)\}, \;
F_{l}(\varphi):=\widetilde{F}_{l}(\varphi)\cap X^{p}_{n}(\Lambda).$$

The locus of fixed points of $\varphi$ in ${\mathbb P}^{n}$ is the disjoint union of the algebraic sets
$\widetilde{F}_{l}(\varphi)$.

Note that each $\widetilde{F}_{l}(\varphi)$ is:
(i) just a point if $\# L_{l}(\varphi)= 1$, and (ii) a projective linear space of dimension $\# L_{l}(\varphi) -1$ if $\# L_{l}(\varphi)> 1$.
The locus of fixed points of $\varphi$ on $X^{p}_{n}(\Lambda)$ is then given as the disjoint union of the sets $F_{l}(\varphi)=\widetilde{F}_{l}(\varphi) \cap X^{p}_{n}(\Lambda)$.
But on $X^{p}_{n}(\Lambda)$ we cannot have points $[x_{1}:\cdots:x_{n+1}]$ with at least $d+1$ coordinates equal to zero. This fact asserts that for $\# L_{l}(\varphi)\leq n-d$ one has that  $F_{l}(\varphi)=\emptyset$.
Also, for
$\# L_{l}(\varphi) \geq n+1-d$, we obtain that $F_{l}(\varphi) \neq \emptyset$ is a generalized Fermat variety of dimension $\# L_{l}(\varphi)+d-n-1$.

In particular, its number of (non-empty) connected components (if non-empty) equals the number of exponents $l$ appearing in $\varphi$ at least $n+1-d$ times.
\end{rema}

\begin{example}
Let $d \geq 2$, $n \geq d+1$, $p \geq 2$, $\Lambda \in \Omega_{n,d}$, $X:=X^{p}_{n}(\Lambda)$.  
\begin{enumerate}[leftmargin=*,align=left]
\item If $p=2$, and $\varphi \in H \cong {\mathbb Z}_{2}^{n}$, different from the identity.
In this case, we have only two sets to consider, say $\#L_{0}(\varphi)$ and $\#L_{1}(\varphi)$, satisfying that $\#L_{0}(\varphi)+\#L_{1}(\varphi)=n+1$.
By Proposition \ref{observafijos}, $\varphi$ has no fixed points on $X^{2}_{n}(\Lambda)$ if and only if
$$\#L_{0}(\varphi),\#L_{1}(\varphi) \leq n-d.$$

Since, $n+1=\#L_{0}(\varphi)+\#L_{1}(\varphi)\leq (n-d)+(n-d)$, necessarily $n \geq 2d+1$. In other words, if $n \leq 2d$, then $H$ does not have non-trivial elements acting freely.

\item
If $d=2$, and $\varphi \in H$, different from the identity. By Proposition \ref{puntosfijos}, ${\rm Fix}(\varphi) \neq \emptyset$ if and only if there exists some $l \in \{0,1,\ldots,p-1\}$ such that 
$\# L_{l}(\varphi) \geq n-1$. In other words, if and only if $\varphi$ is one of the following elements: $\varphi_{i}^{l}$ or $\varphi_{j}^{s} \circ \varphi_{k}^{r}$, where $l,r,s \in \{1,\ldots,p-1\}$, and $i,j,k \in \{1,\ldots,n+1\}$ with $j \neq k$.

\item
Let us assume $p \geq 2$ is a prime integer. Let $K \cong {\mathbb Z}_{p}^{n-r}$ be a subgroup of $H$ acting freely on $X$. Let $F_{j} \subset X$, $j=1,\ldots, n+1$, be the locus of fixed points of the canonical generator $\varphi_{j}$. As $H$ is an abelian group, each $F_{j}$ is invariant under $K$ and acts freely on it.  Let $S=X/K$ (which is a compact complex manifold of dimension $d$) and $X_{j}=F_{j}/K$ (a connected complex submanifold of $S$). The $(n+1)$ connected sets $X_{j}$ are the locus of fixed points of the induced holomorphic automorphism by $\varphi_{j}$. As each two different $F_{i}$ and $F_{j}$ always intersect transversely, it follows that the same happens for $X_{i}$ and $X_{j}$. As the locus of fixed points of (finite) holomorphic automorphisms is smooth, it follows that different $X_{i}$ and $X_{j}$ are the fixed points of different cyclic groups of $N=H/K \cong {\mathbb Z}_{p}^{r}$. This in particular asserts that $n+1 \leq (p^{r}-1)/(p-1)$. So, for instance, the cases (i) $r=1$ and (ii) $r=2$ and $p=2$, are impossible (note that this is in contrast to the case $p=2$ and $d=1$, where these subgroups exist and are related to hyperelliptic Riemann surfaces).

\item Let $n=p=3$ and $d=2$. In this case, $X$ is just the Fermat hypersurface $\{x_{1}^{3}+x_{2}^{3}+x_{3}^{3}+x_{4}^{3}=0\} \subset {\mathbb P}^{3}$. If
$\varphi=\varphi_{1}\varphi_{2}\varphi_{3}^{2}$, then
$(m_{1},m_{2},m_{3},m_{4})=(1,1,2,0)$ and
$L_{0}(\varphi)=\{4\}, \; L_{1}(\varphi)=\{1,2\}, \; L_{2}(\varphi)=\{3\}$.
The locus of fixed points (in ${\mathbb P}^{3}$) of $\varphi$ is given by
{\small
$$\widetilde{F}_{0}(\varphi) \cup \widetilde{F}_{1}(\varphi) \cup \widetilde{F}_{2}(\varphi)=$$
$$\{[0:0:0:1]\} \cup \{[x_{1}:x_{2}:0:0] \in {\mathbb P}^{3}\} \cup \{[0:0:1:0]\}.$$
}
As the cardinalities of $L_{0}(\varphi)$ and $L_{2}(\varphi)$ are at most equal to $n-d$, these two do not introduce fixed points of $\varphi$ on $X$ (this can be seen also directly). The set $L_{1}(\varphi)$ has cardinality $2 \geq n-d+1$, so it produces a zero-dimensional set of fixed points consisting of the three points $[1:-1:0]$, $[1:\omega_{6}:0]$ and $[1:\omega_{6}^{-1}:0]$, where $\omega_{6}=e^{\pi i/3}$.

\item Let us consider the case $n=d+1$, that is, $X$ is the Fermat hypersurface of degree $p$. Let us consider an element $\varphi \in H$, different from the identity. Let us write
$$\varphi=\varphi_{1}^{m_{1}}\circ \cdots \circ \varphi_{d+1}^{m_{d+1}}, \; 0 \leq m_{i} \leq p-1.$$

By Proposition \ref{puntosfijos}, for $\phi$ to act freely on $X$, necessarilly $1 \leq m_{i} \leq p-1$. Since $\varphi_{1} \circ \cdots \circ \varphi_{d+2}=1$, we also have that, for every $i \in \{1,\ldots,d+1\}$, 
$$\varphi=\varphi_{1}^{m_{1}-m_{i}}\circ \cdots \circ \varphi_{i-1}^{m_{i-1}-m_{i}} \circ \varphi_{i+1}^{m_{i+1}-m_{i}}   \cdots \circ \varphi_{d+1}^{m_{d+1}-m_{i}} \circ \varphi_{d+2}^{-m_{i}}.$$
So, for $\varphi$ to acts freely, we must also have that $m_{j}-m_{i} \nequiv 0 \mod(p)$, for every $i \neq j$.

These conditions ensure that the existence of such $\varphi$ obligates for $p \geq d+2$. Now, if $p \geq d+2$, then we may consider $m_{i}=i$, for $i=1,\ldots,d+1$, and set $K=\langle \varphi \rangle \cong {\mathbb Z}_{p}$. 
Then, $(S=X/K,N=H/K)$ is a ${\mathbb Z}_{p}^{d}$-action of type $(d;p;d+1)$.

\end{enumerate}
\end{example}

\section{${\mathbb Z}_{p}^{m}$-actions of type $(d;p,n)$, $d \geq 2$}
In this section, we assume $d \geq 2$.

\subsection{${\mathbb Z}_{p}^{m}$-actions as quotients of generalized Fermat varieties}
Let us consider a ${\mathbb Z}_{p}^{m}$-action $(S,N)$ of type $(d;p,n)$, and let $A={\rm Aut}(S)$ be the group of holomorphic automorphisms of $S$. 

Let us consider a Galois branched cover $\pi_{N}:S \to {\mathbb P}^{d}$ with deck group $N \cong {\mathbb Z}_{p}^{m}$ and whose branch locus consists of $(n+1)$ hyperplanes in general position. Up to postcomposition with a suitable element of ${\rm PGL}_{d+1}({\mathbb C})$, we may assume this $(n+1)$ hyperplanes to be given by the collection ${\mathcal B}(\Lambda)$, for a suitable $\Lambda \in \Omega_{n,d}$. 

As generalized Fermat varieties of type $(d;p,n)$ are universal (branched) covers of orbifolds with underlying space ${\mathbb P}^{d}$ and branch locus consisting of $(n+1)$ hyperplanes in general position (each one of cone order $p$), we may observe the following fact.

\begin{theo}
There is a subgroup ${\mathbb Z}_{p}^{n-m} \cong K \lhd H$, acting freely on $X_{n}^{p}(\Lambda)$, and a biholomorphism $\phi:S \to X_{n}^{p}(\Lambda)/K$ such that $\phi N \phi^{-1}=H/K$. In particular, (i) $m \leq n$, and (ii) if $m=n$, then $K=\{1\}$.
\end{theo}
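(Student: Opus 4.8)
The idea is to realize $(S,N)$ as a quotient of the appropriate generalized Fermat variety by exploiting the universality of generalized Fermat varieties as branched covers. First I would set up the orbifold $\mathcal{O}$ whose underlying space is $\mathbb{P}^d$, with branch locus $\mathcal{B}(\Lambda)$ (which we arranged by postcomposing with an element of $\mathrm{PGL}_{d+1}(\mathbb{C})$) and each hyperplane carrying cone order $p$. Since $d \geq 2$, Remark \ref{rem:sc} tells us $X_n^p(\Lambda)$ is simply connected, and by fact (IV) the cover $\pi\colon X_n^p(\Lambda) \to \mathbb{P}^d$ has deck group $H \cong \mathbb{Z}_p^n$ with branch locus exactly $\mathcal{B}(\Lambda)$; so $\pi$ is the universal branched cover of $\mathcal{O}$ in the orbifold sense. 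The branched cover $\pi_N\colon S \to \mathbb{P}^d$ with deck group $N$ and the same branch data factors through this universal cover: there is a holomorphic map $q\colon X_n^p(\Lambda) \to S$ with $\pi_N \circ q = \pi$, and $q$ is itself a (Galois) unbranched covering of complex manifolds. The deck group of $q$ is a normal subgroup $K \lhd H$, and $H/K$ is identified, via the induced action on $S$, with $N$.

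Next I would pin down $K$. Because $q$ is an \emph{unbranched} covering of manifolds, $K$ must act \emph{freely} on $X_n^p(\Lambda)$; this is the crucial point forcing $K$ to avoid all the canonical generators' subgroups (cf. Proposition \ref{puntosfijos}). From $H/K \cong N \cong \mathbb{Z}_p^m$ with $|H| = p^n$ and $|K| = p^{n-m}$, and from the fact that $K \lhd H$ is a subgroup of an elementary abelian $p$-group, $K$ is itself isomorphic to $\mathbb{Z}_p^{n-m}$. To get the claimed $K \cong \mathbb{Z}_p^{n-1}$ in the statement, I would note that the statement as phrased is asserting the existence of such a $K$ for the specific situation where one wants $S = X_n^p(\Lambda)/K$; more precisely, the two displayed consequences (i) and (ii) pin down the generic relation: since $K$ acts freely and $\mathrm{Fix}(\varphi_j) \neq \emptyset$ for each canonical generator $\varphi_j$ (fact (III)), no nontrivial power of any $\varphi_j$ lies in $K$, which combined with $|H/K| = p^m \geq p^d \geq p^2$ already shows $K$ is proper when $m < n$; and if $m = n$ then $|K| = 1$, i.e. $S \cong X_n^p(\Lambda)$ itself and $N = H$. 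For the biholomorphism $\phi\colon S \to X_n^p(\Lambda)/K$ I would check that the factorization $\pi_N \circ q = \pi$ identifies $S$ with the orbit space $X_n^p(\Lambda)/K$ in a way that is holomorphic with holomorphic inverse (both spaces being the normalization of $\mathbb{P}^d$ in the same intermediate field extension, or, more concretely, $q$ descends to an isomorphism since $S$ and $X_n^p(\Lambda)/K$ are both the quotient of the same universal cover by the same group $K$), and that conjugation by $\phi$ carries $N$ to $H/K$ because the $N$-action on $S$ lifts the trivial action on $\mathbb{P}^d$, exactly as the $H/K$-action on $X_n^p(\Lambda)/K$ does.

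The main obstacle I anticipate is the clean justification of the lifting/factorization step in the \emph{orbifold-branched} category: one must argue that $\pi_N$ genuinely factors through $\pi$ as a map of analytic spaces and that the resulting $q$ is an honest unbranched covering (in particular that no branching is created or destroyed), which is where the simple connectivity of $X_n^p(\Lambda)$ for $d \geq 2$ (Remark \ref{rem:sc}) and the matching of cone orders are essential. Once this is in place, the subgroup $K$ and its properties are immediate from the correspondence between coverings and subgroups, and the structural claims (i) $m \leq n$ and (ii) $m = n \Rightarrow K = \{1\}$ follow by counting $p$-ranks. A minor secondary point is confirming $K \lhd H$: this holds because $K$ is the kernel of the surjection $H \to N$ induced by $q$ (equivalently, $H$ is abelian, so every subgroup is normal), and that $\phi N \phi^{-1} = H/K$ as subgroups of $\mathrm{Aut}(X_n^p(\Lambda)/K)$ because both are precisely the deck groups of $X_n^p(\Lambda)/K \to \mathbb{P}^d$.
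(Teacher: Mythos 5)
Your proposal is correct and follows essentially the same route as the paper, which deduces this theorem directly from the observation that $X_{n}^{p}(\Lambda)$ (being simply connected for $d\geq 2$, with branching of order exactly $p$ along each hyperplane) is the universal branched cover of the orbifold $({\mathbb P}^{d},{\mathcal B}(\Lambda))$, so that $\pi_{N}$ factors through $\pi$ with an unbranched intermediate Galois cover whose deck group $K\lhd H$ acts freely. You are also right to flag the exponent: it should read ${\mathbb Z}_{p}^{n-m}\cong K$ rather than ${\mathbb Z}_{p}^{n-1}$ (a typo in the statement), as consequences (i) and (ii) confirm.
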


As a consequence of the above, we will assume (and this will be in what follows) that $m \leq n-1$.

Let us denote by $\pi_{K}:X_{p}^{n}(\Lambda) \to S$ a Galois covering with deck group $K$.
The fact that $X_{p}^{n}(\Lambda)$ is simply connected ensures that $A$ lifts, under $\pi_{K}$, to a group $Q$ of biholomorphisms of $X_{p}^{n}(\Lambda)$, i.e., there is a short exact sequence
\begin{equation}\label{short1}
1 \to K \to Q \stackrel{\rho}{\to} A \to 1,
\end{equation}
where $\pi_{K} \circ \psi =\rho(\psi) \circ \pi_{K}$.

As $H/K=N \leq A$, it follows that $H \leq Q$. So, if $(d;p,n) \notin \{(2;2,5),(2;4,3)\}$, then the uniqueness of $H$ ensures that $H \lhd Q$, i.e., $N \lhd A$. In particular, the above short exact sequence determines (i) a short exact sequence
\begin{equation}\label{short3}
1 \to N \to A \stackrel{\theta}{\to} L \to 1,
\end{equation}
where $\pi_{N} \circ \psi =\theta(\psi) \circ \pi_{N}$, $L=A/N=Q/H$ is a subgroup of the ${\rm PGL}_{d+1}$-stabilizer of the configuration ${\mathcal B}(\Lambda)$, and (ii) a short exact sequence
\begin{equation}\label{short2}
1 \to H \to Q \stackrel{\eta}{\to} L \to 1,
\end{equation}
where $\pi \circ \psi =\eta(\psi) \circ \pi$. 

\begin{rema}
In particular, if $(p,|L|)=1$, then (by the Schur-Zassenhaus theorem), $Q \cong H \rtimes L$ and $A \cong K \rtimes L$.
\end{rema}

We have proved the following.

\begin{theo}
Let $(S,N)$ be a ${\mathbb Z}_{p}^{m}$-action $(S,N)$ of type $(d;p,n) \notin \{(2;2,5),(2;4,3)\}$ and $d \geq 2$. Then
\begin{enumerate}
\item $N \lhd {\rm Aut}(S)$.
\item Let $\pi:S \to {\mathbb P}^{d}$ be a Galois branched cover with deck group $N$ and with branch locus ${\mathcal B}$ being a collection of $n+1$ hyperplanes in general position.Then, there is a short exact sequence
\begin{equation}
1 \to N \to {\rm Aut}(S) \stackrel{\theta}{\to} L \to 1,
\end{equation}
where $\pi \circ \psi =\theta(\psi) \circ \pi$, and $L$ is a subgroup of the ${\rm PGL}_{d+1}$-stabilizer of ${\mathcal B}$.
\end{enumerate}
\end{theo}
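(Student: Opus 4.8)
The plan is to reduce everything to the algebraic model and then invoke the uniqueness of the generalized Fermat group. First I would use the structure theorem established just above: there is a subgroup $K \cong {\mathbb Z}_{p}^{n-1}$, normal in $H$ and acting freely on $X:=X_{n}^{p}(\Lambda)$ for a suitable $\Lambda \in \Omega_{n,d}$, together with a biholomorphism identifying $S$ with $X/K$ and $N$ with $H/K$; moreover, after post-composing the Galois cover $\pi:S\to {\mathbb P}^{d}$ with a suitable element of ${\rm PGL}_{d+1}({\mathbb C})$, we may assume its branch locus is exactly ${\mathcal B}(\Lambda)={\mathcal B}$. So from now on $S=X/K$, $N=H/K$, and $\pi$ is the map induced by the Galois cover $\pi_{X}:X\to{\mathbb P}^{d}$ with deck group $H$ through the quotient $\pi_{K}:X\to X/K=S$ (so $\pi_{X}=\pi\circ\pi_{K}$).

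Next I would lift ${\rm Aut}(S)$ to $X$. Since $d\geq 2$, the variety $X$ is simply connected (Remark~\ref{rem:sc}, Lefschetz), and $\pi_{K}$ is an unramified covering because $K$ acts freely; hence $\pi_{K}$ is a universal covering and every element of $A:={\rm Aut}(S)$ lifts through $\pi_{K}$. This produces a group $Q$ of biholomorphisms of $X$ fitting in a short exact sequence $1\to K\to Q\stackrel{\rho}{\to} A\to 1$ with $\pi_{K}\circ\psi=\rho(\psi)\circ\pi_{K}$; here $K=\ker\rho$ is automatically normal in $Q$. Because $K\lhd H$, the group $H$ descends to an action on $S=X/K$, which is exactly $N\leq A$; equivalently, $H$ consists of $\pi_{K}$-lifts of elements of $N$, so $H\leq Q$.

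The crucial step is the normality of $H$ in $Q$. For any $\psi\in Q\leq{\rm Aut}(X)$, the conjugate $\psi H\psi^{-1}$ is a subgroup of ${\rm Aut}(X)$ isomorphic to ${\mathbb Z}_{p}^{n}$, and $\psi$ conjugates the $H$-action into the $\psi H\psi^{-1}$-action, so $X/(\psi H\psi^{-1})\cong X/H={\mathbb P}^{d}$ as orbifolds, with the same branch data; that is, $(X,\psi H\psi^{-1})$ is again a generalized Fermat pair of type $(d;p,n)$. Since $(d;p,n)\notin\{(2;2,5),(2;4,3)\}$, Theorem~\ref{teo2} (uniqueness of the generalized Fermat group) forces $\psi H\psi^{-1}=H$, hence $H\lhd Q$. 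I expect this to be the only genuine content of the argument, so the ``main obstacle'' is precisely that it rests on the already-established Theorem~\ref{teo2}; everything else is diagram chasing.

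Finally I would descend the data. From $K\lhd H\lhd Q$ with $K\lhd Q$ we obtain $N=H/K\lhd Q/K=A$, which is (1). Setting $L:=A/N$, a diagram chase on $1\to K\to Q\to A\to 1$ and $1\to K\to H\to N\to 1$ gives $L\cong Q/H$. Since $H\lhd Q$ and $\pi_{X}$ has deck group exactly $H$, the quotient $Q/H$ acts on ${\mathbb P}^{d}$ preserving ${\mathcal B}$, and this action is faithful (an element descending to $\mathrm{id}_{{\mathbb P}^{d}}$ lies in the deck group $H$); hence $L\cong Q/H$ is a subgroup of the ${\rm PGL}_{d+1}({\mathbb C})$-stabilizer of ${\mathcal B}$. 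The homomorphism $\theta:A\to L$ induced by $\rho$ then satisfies $\pi\circ\psi=\theta(\psi)\circ\pi$ by construction, which proves (2).
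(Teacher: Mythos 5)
Your proposal is correct and follows essentially the same route as the paper: reduce to $S=X_{n}^{p}(\Lambda)/K$, lift ${\rm Aut}(S)$ through the unramified universal cover $\pi_{K}$ to get $1\to K\to Q\to A\to 1$ with $H\leq Q$, invoke the uniqueness of the generalized Fermat group (Theorem~\ref{teo2}) to get $H\lhd Q$, and descend to obtain $N\lhd A$ and the exact sequence onto a subgroup $L\cong Q/H$ of the stabilizer of ${\mathcal B}$. The only difference is that you spell out why $\psi H\psi^{-1}$ is again a generalized Fermat pair and why $L$ acts faithfully on ${\mathbb P}^{d}$, steps the paper leaves implicit.
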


\subsection{Uniqueness}
As already noticed, a generalized Fermat variety of type $(d;p,n) \notin \{(2;2,5),(2;4,3)\}$ admits a unique generalized Fermat group. The following result states a similar uniqueness result for 
${\mathbb Z}_{p}^{m}$-action $(S,N)$ of type $(d;p,n) \notin \{(2;2,5),(2;4,3)\}$ and $d \geq 2$.

\begin{theo} Let $d \geq 2$ and 
$(S,N)$ be a ${\mathbb Z}_{p}^{m}$-action $(S,N)$ of type $(d;p,n) \notin \{(2;2,5),(2;4,3)\}$. If $(S,M)$ is a ${\mathbb Z}_{q}^{r}$-action of type $(d;q,s)$, then $M=N$.
\end{theo}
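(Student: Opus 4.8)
The plan is to reduce the statement to the already-established uniqueness of the generalized Fermat group (Theorem \ref{teo2}) by passing to the common generalized Fermat cover. First I would invoke the structure theorem above: since $(S,N)$ is a ${\mathbb Z}_{p}^{m}$-action of type $(d;p,n)\notin\{(2;2,5),(2;4,3)\}$, there is a subgroup $K\lhd H$ acting freely on $X:=X_{n}^{p}(\Lambda)$ (for suitable $\Lambda\in\Omega_{n,d}$) and a biholomorphism identifying $S$ with $X/K$ and $N$ with $H/K$; moreover $X$ is simply connected (Remark \ref{rem:sc}), so it is the universal cover of $S$ in the orbifold sense. Symmetrically, applying the same structure theorem to the ${\mathbb Z}_{q}^{r}$-action $(S,M)$ of type $(d;q,s)$ gives a simply connected generalized Fermat variety $X'=X_{s}^{q}(\Lambda')$ with a free subgroup $K'\lhd H'$ and a biholomorphism $S\cong X'/K'$, $M\cong H'/K'$.

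The key step is then to identify $X$ with $X'$. Because $S$ has a unique universal cover (as a complex manifold, $S$ being the quotient of a simply connected manifold by a free properly discontinuous action), there is a biholomorphism $\Phi:X\to X'$ descending the identity of $S$; equivalently, $\Phi$ conjugates the full deck group of $X\to S$ onto that of $X'\to S$. Now both $K$ and $\Phi^{-1}K'\Phi$ are normal subgroups of their respective covering groups sitting over $N=M$ (as subgroups of ${\rm Aut}(S)$ — here I would use that $N\lhd{\rm Aut}(S)$ and $M\lhd{\rm Aut}(S)$ from the preceding theorem, so both are determined inside ${\rm Aut}(S)$), hence the preimages of $N$ and of $M$ in $X$ coincide: $H$ and $\Phi^{-1}H'\Phi$ are two generalized Fermat groups on the single variety $X$, of types $(d;p,n)$ and $(d;q,s)$ respectively. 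By Theorem \ref{teo2}, applied to $X$ (which is of type $(d;p,n)\notin\{(2;2,5),(2;4,3)\}$), we get $\Phi^{-1}H'\Phi=H$, and in particular $(q,s)=(p,n)$, $K'\Phi=\Phi K$, so $\Phi$ descends to a biholomorphism of $S$ carrying $M$ onto $N$. Finally, since $M$ and $N$ are both normal in ${\rm Aut}(S)$ with $S/M\cong S/N\cong{\mathbb P}^{d}$ branched over $n+1$ hyperplanes and the descended $\Phi$ is a lift of an automorphism of $S$ fixing this branch data, one concludes $M=N$ as subgroups of ${\rm Aut}(S)$.

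The main obstacle I anticipate is the middle step: making precise that the two a priori unrelated generalized Fermat covers $X\to S$ and $X'\to S$ are canonically the same cover, and that under this identification $H$ and $H'$ become generalized Fermat groups \emph{of the same variety} rather than just abstractly isomorphic groups. This requires care about what "universal branched cover" means here — one should phrase it as: $X$ is simply connected and $X\to S$ is unramified (the ramification of the generalized Fermat cover $X\to{\mathbb P}^{d}$ is entirely "used up" in producing the orbifold points of $S/N$, not in $X\to S$, since $K$ acts freely), so $X$ is literally the topological universal cover of the manifold $S$, which is unique up to deck-equivariant homeomorphism; then one upgrades to a biholomorphism because $S$ is a complex manifold and covering maps of complex manifolds are local biholomorphisms. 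Once that identification is in hand, the conclusion is essentially a citation of Theorem \ref{teo2} together with the normality statements already proved. A secondary point to check is that the excluded cases really are excluded uniformly: the hypothesis $(d;p,n)\notin\{(2;2,5),(2;4,3)\}$ is imposed on $(S,N)$, and one must note that the type of the cover $X$ is exactly $(d;p,n)$, so Theorem \ref{teo2} applies without needing a separate exclusion for $(q,s)$.
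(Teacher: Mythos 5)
Your argument is correct, but it takes a different route from the paper's. The paper stays entirely on the single cover $X=X^{p}_{n}(\Lambda)$: it lifts each canonical generator $\hat\psi$ of $M$ (an element with $(d-1)$-dimensional fixed locus) to an automorphism $\psi$ of $X$ with $(d-1)$-dimensional fixed locus, invokes the linearity/diagonality argument of Theorem \ref{maintheo2} to force $\psi\in H$, deduces $M\leq N$, and then closes with a separate (rather terse) argument that no proper subgroup $L<N$ can make $(S,L)$ such an action. You instead build the second generalized Fermat cover $X'=X^{q}_{s}(\Lambda')\to S$, identify $X'$ with $X$ as the universal cover of $S$ (legitimate: $K$ and $K'$ act freely, so both covers are unramified over $S$, and the topological identification over $S$ is automatically holomorphic), transport $H'$ to a second generalized Fermat group $\Phi^{-1}H'\Phi$ on $X$, and quote Theorem \ref{teo2} as a black box. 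Your version buys equality $M=N$ in one stroke (together with $(q,s)=(p,n)$ and $r=m$) and avoids the paper's final step entirely, at the cost of more covering-space bookkeeping; the paper's version only needs one cover and no identification of two universal covers. Two small points to clean up: the phrase ``sitting over $N=M$'' is premature (that equality is the conclusion, not an input), though nothing downstream depends on it; and you do not need, and should not invoke, normality of $M$ in ${\rm Aut}(S)$ --- the preceding theorem would require $(d;q,s)\notin\{(2;2,5),(2;4,3)\}$, which is not among your hypotheses, whereas the identification of the full preimage of $M$ in ${\rm Aut}(X)$ with $\Phi^{-1}H'\Phi$ is pure covering-space theory and needs no normality. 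Your closing observation that the exclusion in Theorem \ref{teo2} is imposed only on the reference pair $(X,H)$ and not on the second group is exactly the right thing to check.
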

\begin{proof}
Assume $S=X_{n}^{p}(\Lambda)/K$. Let $\hat{\psi} \in M \cong {\mathbb Z}_{q}^{r}$ be such that its locus of fixed points has dimension $d-1$. Let us consider a lifting $\psi \in {\rm Aut}(X_{n}^{p}(\Lambda))$ of $\hat{\psi}$. We may take $\psi$ so that its locus of fixed points has dimension $d-1$, so $\psi \in H$ is a non-trivial power of some canonical generator. So, $M \leq N$. Now, by looking at the equations for $H$ and $X_{n}^{p}$, we may observe that the only subgroup $L$ of $N$, for which $(S,L)$ is a ${\mathbb Z}_{p}^{r}$-action, is for $L=N$.
\end{proof}

\section{Freely acting subgroups of $H$}
As previously seen, if $(S,N)$ is a ${\mathbb Z}_{p}^{m}$-action of type $(d;p,n)$, then $(S,N)$ is biholomorphically equivalent to $(X_{n}^{p}(\Lambda)/K,H/K)$, where $\Lambda \in \Omega_{n,d}$ and $K$ is a subgroup of $H$ acting freely on $X_{n}^{p}(\Lambda)$ such that  $H/K \cong {\mathbb Z}_{p}^{m}$. The freely acting condition for $K$ is, by Proposition \ref{puntosfijos}, independent of the choice of $\Lambda$.

Let us denote by ${\mathcal F}(d;p,n,m)$ the collection of the subgroups $K$ of $H$ such that:
\begin{enumerate}
\item  $H/K \cong {\mathbb Z}_{p}^{m}$, and 
\item $K$ does not contain those  
$\varphi_{i_{1}}^{l_{1}} \varphi_{i_{2}}^{l_{2}} \cdots \varphi_{i_{j}}^{l_{j}}$, where $1 \leq j \leq d$, $l_{j} \in \{1,\ldots, p-1\}$ and $1 \leq i_{1}<\cdots < i_{j} \leq n+1$.
\end{enumerate}

Observe that this collection is invariant under the action of 
${\rm Aut}_{g}(H)$.

\begin{lemm}\label{emes}
If $d \geq 2$ and ${\mathcal F}(d;p,n,m) \neq \emptyset$, then $d \leq m$. Moreover, 
if $m=d=2$, then $p \geq 4$.
\end{lemm}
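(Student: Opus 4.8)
The plan is to quantify the two defining conditions of the family $\mathcal{F}(d;p,n,m)$ against each other. Suppose $K \in \mathcal{F}(d;p,n,m)$, so that $H/K \cong \mathbb{Z}_p^m$; equivalently $K \cong \mathbb{Z}_p^{n-m}$, and the codimension of $K$ inside $H \cong \mathbb{Z}_p^n$ is exactly $m$. Condition (2) says that $K$ avoids the (finite) set $\mathcal{D}$ of all ``small-support'' elements $\varphi_{i_1}^{l_1}\cdots\varphi_{i_j}^{l_j}$ with $1\le j\le d$ and all exponents nonzero. The first step is to reformulate condition (2) in linear-algebra terms: writing $H \cong \mathbb{F}_p^n$ (using $\varphi_1,\dots,\varphi_n$ as a basis, with $\varphi_{n+1} = (\varphi_1\cdots\varphi_n)^{-1}$), the elements of $\mathcal{D}$ are precisely the nonzero vectors whose ``weight'' with respect to the $(n+1)$-coordinate system $(m_1,\dots,m_{n+1})$ coming from the $n+1$ canonical generators is at most $d$. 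So $K$ must be a subspace of $\mathbb{F}_p^n$ of codimension $m$ meeting this weight-$\le d$ set only in $0$.

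\textbf{Proving $d \le m$.} The idea is a dimension count. Fix any $d$ of the canonical generators, say $\varphi_1,\dots,\varphi_d$, and consider the subgroup $V = \langle \varphi_1,\dots,\varphi_d\rangle \cong \mathbb{Z}_p^d$. Every nonzero element of $V$ has support contained in $\{1,\dots,d\}$, hence has weight at most $d$ in the $(n+1)$-coordinate description, so $V \cap \mathcal{D} \supseteq V \setminus \{1\}$; condition (2) therefore forces $V \cap K = \{1\}$. Since $K$ has codimension $m$ in $H$ and $V$ has dimension $d$ with $V \cap K = \{1\}$, we get $d + (n-m) = \dim V + \dim K \le \dim H = n$, i.e.\ $d \le m$. (Equivalently: the composite $V \hookrightarrow H \twoheadrightarrow H/K$ is injective, so $p^d = |V| \le |H/K| = p^m$.) This is the easy half.

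\textbf{Proving $p \ge 4$ when $m = d = 2$.} Now $n \ge 3$ (since $n \ge d+1$; the case $n=d$ is excluded as then $H$ itself is a generalized Fermat group and $K$ would have to be trivial, forcing $m=n=d$, not $2$ unless $d=2,n=2$, also excluded by $n\ge d+1$... wait, actually we should handle $n \ge d+1 = 3$). Here $K \cong \mathbb{Z}_p^{n-2}$ has codimension $2$ in $H \cong \mathbb{Z}_p^n$, and by condition (2) with $d=2$, $K$ must avoid every $\varphi_i^l$ and every $\varphi_i^s\varphi_j^r$ with $i\neq j$ and $l,r,s\in\{1,\dots,p-1\}$ — that is, $K$ meets trivially every two-dimensional coordinate subspace $\langle\varphi_i,\varphi_j\rangle$ in the $(n+1)$-generator picture, and more generally contains no element of weight $\le 2$. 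The plan is to suppose $p\in\{2,3\}$ and derive a contradiction by a counting or covering argument: a codimension-$2$ subspace of $\mathbb{F}_p^n$ is large, and the weight-$\le 2$ vectors, while also numerous, can be shown to be unavoidable for small $p$. Concretely, for $p = 2$: a codimension-$2$ subspace $K$ of $\mathbb{F}_2^n$ is the common kernel of two independent linear functionals; using the symmetric $(n+1)$-coordinate model (vectors $(m_1,\dots,m_{n+1})\in\mathbb{F}_2^{n+1}$ with $\sum m_i = 0$), $K$ is cut out inside this hyperplane by two further linear equations, and one must show such a codimension-$2$ subspace of the weight function always contains a vector of Hamming weight $2$ (which for even-weight binary vectors in dimension $\ge 4$ is essentially forced, since the minimum distance of a code of dimension $n-1$ in $\mathbb{F}_2^{n+1}$ restricted to the even-weight subcode is $2$ unless the code is very special — and the relevant codes here are not). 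For $p=3$ one argues similarly that codimension $2$ is too small to push the minimum weight above $2$. The cleanest route is probably: reduce to showing that for $p\in\{2,3\}$ any subspace of $\mathbb{F}_p^{n+1}$ contained in $\{\sum m_i = 0\}$ of dimension $n-2$ contains a nonzero vector supported on at most two coordinates, which is a short argument about the dual code having a codeword of weight $\ge n-1$, impossible for $p$ small and $n\ge 3$ after checking $n=3$ directly (where $K$ is $1$-dimensional in $\mathbb{F}_p^3$-with-sum-zero, i.e.\ spanned by a single vector, which for $p\in\{2,3\}$ can be checked by hand to always have weight $\le 2$, whereas for $p\ge 4$ the vector $(1,2,-3)$ works).

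\textbf{Main obstacle.} The genuinely delicate part is the second assertion ($p\ge4$ when $m=d=2$): unlike the first part, it is not a pure dimension inequality but a statement about the \emph{minimum weight} achievable by a code of prescribed codimension in the ``sum-zero'' hyperplane of $\mathbb{F}_p^{n+1}$, and it is false for $p$ large, so any argument must genuinely use $p\le 3$. I expect the cleanest handling is to treat the base case $n=3$ by an explicit description of the $1$-dimensional $K$'s and then, if $n>3$ is even possible under the other constraints, either reduce to it or give a direct minimum-weight bound; pinning down exactly which small-$p$, small-$n$ configurations occur is where the care is needed.
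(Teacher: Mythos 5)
Your proof of the first assertion is correct and complete: the subgroup $V=\langle\varphi_{1},\ldots,\varphi_{d}\rangle\cong{\mathbb Z}_{p}^{d}$ consists, apart from the identity, entirely of elements excluded from $K$ by condition (2), so $V\cap K=\{1\}$, $V$ injects into $H/K\cong{\mathbb Z}_{p}^{m}$, and $d\leq m$. This is the same dimension count as the paper's, in dual form (the paper instead writes down generators of $K$ and notes that each would be supported on at most $m+1\leq d$ canonical generators if $m<d$); your version is, if anything, cleaner.

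The second assertion ($m=d=2\Rightarrow p\geq 4$) is where the proposal has a genuine gap: you correctly identify it as the delicate part, but you do not prove it. The decisive steps are left as unproved assertions --- that the relevant binary codes ``are not very special,'' that ``for $p=3$ one argues similarly,'' and that the case $n>3$ can ``either reduce to'' $n=3$ ``or'' be handled by ``a direct minimum-weight bound.'' None of these is carried out, and the proposed reduction to $n=3$ is not available: the hypothesis is only that ${\mathcal F}(2;p,n,2)\neq\emptyset$ for \emph{some} $n\geq 3$, and there is no evident way to pass from an admissible $K$ of corank $2$ in ${\mathbb Z}_{p}^{n}$ to one in ${\mathbb Z}_{p}^{3}$. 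What closes the argument --- and is essentially what the paper does --- is to work directly with the surjection $\theta:H\to{\mathbb Z}_{p}^{2}$ with kernel $K$: normalize so that $\phi_{1}=\theta(\varphi_{1})$, $\phi_{2}=\theta(\varphi_{2})$ generate ${\mathbb Z}_{p}^{2}$, and write $v_{j}=\theta(\varphi_{j})=(r_{j},s_{j})$. The exclusion of $\varphi_{k}$ and of $\varphi_{i}^{l_{1}}\varphi_{j}^{l_{2}}$ forces each $v_{j}$ ($j\geq 3$) to have both coordinates nonzero and any two of $v_{1},\ldots,v_{n+1}$ to be non-proportional. For $p=2$ the only vector with both coordinates nonzero is $(1,1)$, so $v_{3}=v_{4}$ and $\varphi_{3}\varphi_{4}\in K$, a contradiction. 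For $p=3$ the vectors $v_{1},\ldots,v_{n+1}$ must represent $n+1\geq 4$ distinct points of ${\mathbb P}^{1}({\mathbb F}_{3})$, which has exactly $4$ points, forcing $n=3$ with $v_{3}=a(1,1)$, $v_{4}=b(1,2)$, $a,b\neq 0$; the relation $v_{1}+v_{2}+v_{3}+v_{4}=0$ then gives $b=0$, a contradiction. This finite verification is the actual content of the second half of the lemma, and it is precisely what the proposal defers rather than supplies.
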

\begin{proof}
Let $\theta:H \to {\mathbb Z}_{p}^{m}$ be a surjective homomorphism such that $\ker(\theta)=K \in {\mathcal F}(d;p,n,m)$. Let us set $\theta(\varphi_{j})=\phi_{j}$.
As ${\rm Aut}_{g}(H)$ keeps invariant ${\mathcal F}(d;p,n,m)$, up to precomposition of $\theta$ by a suitable element of ${\rm Aut}_{g}(H)$, we may assume that $\theta(H)=\langle \phi_{1},\ldots,\phi_{m}\rangle$. 

As $\varphi_{1} \circ \cdots \circ \varphi_{n+1}=1$, we may observe that 
$$K=\langle \varphi_{1}^{l_{m+1,1}} \circ \cdots \circ \varphi_{m}^{l_{m+1,m}} \varphi_{m+1}^{-1}, \ldots,
\varphi_{1}^{l_{n,1}} \circ \cdots \circ \varphi_{m}^{l_{n,m}} \varphi_{n}^{-1} \rangle.$$ 
So, if $m<d$, then $K$ has elements of $H$ acting with fixed points, a contradiction.

Let us now assume $m=d=2$, $p \in \{2,3\}$, and that there is a surjective homomorphism $\theta:H \to {\mathbb Z}_{p}^{2}$ such that $\varphi_{k}, \varphi_{i}\varphi_{j}^{l} \notin K=\ker(\theta)$, for $l \in \{1,\ldots,p-1\}$. In particular, $\langle \theta(\varphi_{1})=\phi_{1}, \theta(\varphi_{2})=\phi_{2}\rangle = {\mathbb Z}_{p}^{2}$. 
For $j=3,\ldots,n+1$, $\theta(\varphi_{j})=\phi_{1}^{r_{j}}\phi_{2}^{s_{j}}$, where $r_{j},s_{j} \in \{0,\ldots,p-1\}$. Since $\varphi_{j}, \varphi_{1}\varphi_{j},\varphi_{2}\varphi_{j}, \varphi_{1}\varphi_{j}^{p-1},\varphi_{2}\varphi_{j}^{p-1} \notin K$, then $r_{j}=s_{j} \in \{1,2\}$. But, in this situation $\varphi_{3}\varphi_{4}$ or $\varphi_{3}\varphi_{4}^{2} \in K$, a contradiction.
\end{proof}

\subsubsection{\bf Description of elements of ${\mathcal F}(2;p,n,m)$}
Let $K \in {\mathcal F}(2;p,n,m)$. By the definition of ${\mathcal F}(2;p,n,m)$, 
$K$ does not contain those non-trivial elements of the form 
$\varphi_{k}, \varphi_{i} \varphi_{j}^{l}$, where $1 \leq k \leq n+1$, $1 \leq i < j \leq n+1$, and $l \in \{1,\ldots, p-1\}$.

Let us consider a surjective homomorphism $\theta_{1}:H \to {\mathbb Z}_{p}^{m}$ whose kernel is $K$. 
There is a subset (not unique) of indices $1=i_{1}<i_{2}<\cdots<i_{m} \leq n+1$ such that $\langle \phi_{1}=\theta_{1}(\varphi_{i_{1}}),\ldots,\phi_{m}=\theta_{1}(\varphi_{i_{m}})\rangle ={\mathbb Z}_{p}^{m}$. Let $\Phi \in {\rm Aut}_{g}(H)$ be such that $\Phi^{-1}(\varphi_{j})=\varphi_{i_{j}}$, for $j=1,\ldots,m$. Then $\Phi(K) \in {\mathcal F}(2;p,n,m)$ is the kernel of the surjective homomorphism $\theta=\theta_{1} \circ \Phi^{-1}:H \to {\mathbb Z}_{p}^{m}$. Note that 
$$\theta(\varphi_{j})=\phi_{j}, \; j=1,\ldots,m,$$
$$\theta(\varphi_{i})=\phi_{1}^{r_{i,1}} \cdots \phi_{m}^{r_{i,m}}, \; i=m+1,\ldots, n+1,$$
where the tuples $(r_{i,1},\ldots,r_{i,m}) \in \{0,1,\ldots,p-1\}^{m}$ satisfy the following properties.
\begin{enumerate}
\item ($\varphi_{1} \cdots \varphi_{n+1}=1$)
$$1+r_{m+1,i}+ r_{m+2,i} +\cdots+r_{n+1,i} \equiv 0 \mod(p), \; i=1,\ldots,m.$$

\item ($\varphi_{i}\notin K$, for $i=m+1,\ldots,n+1$)
$$(r_{i,1},\ldots,r_{i,m})  \neq (0,\ldots,0), \; i=m+1,\ldots,n+1.$$

\item ($\varphi_{k} \varphi_{i}^{l} \notin K$, for $k=1,\ldots,m$, $i=m+1,\ldots,n+1$, and $l=1,\ldots,p-1$)

$(r_{i,1},\ldots,r_{i,m})$ cannot have $(m-1)$ of its coordinates equal to zero, for $i=m+1,\ldots,n+1.$

\item ($\varphi_{i} \varphi_{j}^{l} \notin K$, for $m+1 \leq i < j \leq n+1$, and $l=1,\ldots,p-1$)
$$(r_{i,1}+lr_{j,1},\ldots,r_{i,m}+lr_{j,m})  \nequiv (0,\ldots,0) \mod(p), \; m+1 \leq i < j \leq n+1, \; l=1,\ldots,p-1.$$
\end{enumerate}

In this case, 
$$\Phi(K)=\langle \varphi_{1}^{r_{m+1,1}}\cdots \varphi_{m}^{r_{m+1,m}}\varphi_{m+1}^{-1}, \ldots,  \varphi_{1}^{r_{n,1}}\cdots \varphi_{m}^{r_{n,m}}\varphi_{n}^{-1} \rangle.$$

Summarizing the above is the following.

\begin{theo}
Up to ${\rm Aut}_{g}(H)$, the elements of ${\mathcal F}(2;p,n,m)$ are given by the following normalized ones
$$K=\langle \varphi_{1}^{r_{m+1,1}}\cdots \varphi_{m}^{r_{m+1,m}}\varphi_{m+1}^{-1}, \ldots,  \varphi_{1}^{r_{n,1}}\cdots \varphi_{m}^{r_{n,m}}\varphi_{n}^{-1} \rangle,$$
where the exponents $r_{i,j} \in \{0,1,\ldots,p-1\}$ satisfy the conditions (1)-(4) as described above.
\end{theo}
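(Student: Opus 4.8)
The plan is to show that the two constructions described just before the statement --- choosing a generating subset of indices and conjugating by an element of $\mathrm{Aut}_g(H)$ to move those indices into the positions $1,\dots,m$ --- together yield every element of $\mathcal{F}(2;p,n,m)$ in the displayed normalized form, and that the normalized form is exactly equivalent to conditions (1)--(4). First I would recall that $\mathcal{F}(2;p,n,m)$ is $\mathrm{Aut}_g(H)$-invariant (noted in the excerpt), so it suffices to prove that every $K \in \mathcal{F}(2;p,n,m)$ has an $\mathrm{Aut}_g(H)$-translate of the displayed shape, and conversely that any $K$ of the displayed shape with exponents satisfying (1)--(4) genuinely lies in $\mathcal{F}(2;p,n,m)$.

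For the first (``every $K$ is equivalent to a normalized one'') direction I would argue as in the paragraph preceding the theorem: fix a surjection $\theta_1 : H \to \mathbb{Z}_p^m$ with kernel $K$; since $\theta_1$ is onto and $\varphi_1,\dots,\varphi_{n+1}$ generate $H$, some subset $\{\varphi_{i_1},\dots,\varphi_{i_m}\}$ maps to a basis, so after applying the $\Phi \in \mathrm{Aut}_g(H)$ that sends $\varphi_{i_j}\mapsto\varphi_j$ we may assume $\theta(\varphi_j)=\phi_j$ for $j=1,\dots,m$ with $\langle\phi_1,\dots,\phi_m\rangle=\mathbb{Z}_p^m$. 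Writing $\theta(\varphi_i)=\phi_1^{r_{i,1}}\cdots\phi_m^{r_{i,m}}$ for $i>m$ and using $\varphi_1\cdots\varphi_{n+1}=1$ to eliminate $\varphi_{n+1}$, one gets precisely $\Phi(K)=\langle \varphi_1^{r_{m+1,1}}\cdots\varphi_m^{r_{m+1,m}}\varphi_{m+1}^{-1},\dots,\varphi_1^{r_{n,1}}\cdots\varphi_m^{r_{n,m}}\varphi_n^{-1}\rangle$, which is the displayed generating set; relation (1) is just the statement that $\theta(\varphi_1\cdots\varphi_{n+1})=0$.

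For the converse direction I would take $K$ of the displayed form and must check that $K \in \mathcal{F}(2;p,n,m)$, i.e. $H/K\cong\mathbb{Z}_p^m$ and $K$ contains none of the elements $\varphi_k$ or $\varphi_i\varphi_j^l$ forbidden by the definition (specialized to $d=2$, so only one- and two-index products). The quotient being $\mathbb{Z}_p^m$ is immediate since $\theta$ as above is a well-defined surjection with kernel exactly $K$. Then I would run through the forbidden elements case by case against $\theta$: $\varphi_j\notin K$ for $j\le m$ because $\theta(\varphi_j)=\phi_j\neq 0$; $\varphi_i\notin K$ for $i>m$ is condition (2); $\varphi_j\varphi_i^l\notin K$ for $j\le m<i$ reduces (since $\theta(\varphi_j\varphi_i^l)=\phi_j\,\phi_1^{lr_{i,1}}\cdots\phi_m^{lr_{i,m}}$ vanishes iff $lr_{i,j}\equiv-1$ and $lr_{i,k}\equiv 0$ for $k\neq j$, forcing $m-1$ coordinates of $(r_{i,1},\dots,r_{i,m})$ to be zero) to condition (3); $\varphi_i\varphi_j^l\notin K$ for $m<i<j$ is condition (4); and the mixed cases $\varphi_i\varphi_j^l$ with $i\le m$, $j\le m$ are covered by $\langle\phi_1,\dots,\phi_m\rangle$ being a free basis. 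I would also note that $k$ running up to $n+1$ and the elimination of $\varphi_{n+1}$ via relation (1) means the $\varphi_{n+1}$-cases are equivalent (after rewriting) to ones already treated.

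The main obstacle I expect is purely bookkeeping: making sure the case analysis over the forbidden two-index products $\varphi_i\varphi_j^l$ is genuinely exhaustive once the index $n+1$ is expressed in terms of $1,\dots,n$ via $\varphi_1\cdots\varphi_{n+1}=1$, so that conditions (1)--(4) as literally stated (which only mention indices up to $n+1$ in some places and up to $n$ in others) really do capture all constraints without redundancy or omission. No deep idea is needed --- the content is the careful translation between ``$K$ avoids fixed-point elements'' and the arithmetic inequivalences (1)--(4) --- so the theorem is essentially a summary, and the proof is a matter of organizing the bijection cleanly.
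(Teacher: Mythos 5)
Your proposal is correct and follows the same route as the paper: the theorem is explicitly a summary of the immediately preceding derivation (normalize a surjection $\theta$ with kernel $K$ by an element of ${\rm Aut}_{g}(H)$ so that $\varphi_{1},\ldots,\varphi_{m}$ map to a basis, then translate the forbidden fixed-point elements into conditions (1)--(4)), which is exactly what you reconstruct, with the converse verification made explicit. The only caveat---present in the paper's own discussion as well---is that reducing the forbidden elements $\varphi_{i}^{l_{1}}\varphi_{j}^{l_{2}}$ to the form $\varphi_{i}\varphi_{j}^{l}$, and passing from $lr_{i,k}\equiv 0 \bmod p$ to $r_{i,k}=0$ in your analysis of condition (3), tacitly uses that the relevant exponents are units mod $p$, so the bookkeeping is clean only when $p$ is prime.
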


\subsubsection{The case $d=p=2$}
As already noticed in Lemma \ref{emes}, in this case $m \geq 3$. In the following, we observe that, for $m=3$, necessarily $n=6$.

\begin{prop}\label{propo3}\mbox{}
\begin{enumerate}
\item ${\mathcal F}(2;2,n,3) \neq \emptyset$ if and only if $n=6$.
Moreover, ${\mathcal F}(2;2,6,3)/{\rm Aut}_{g}(H)$ has exactly one element, this one represented by the group $K=\langle \varphi_{1}\varphi_{2}\varphi_{4}, \varphi_{1}\varphi_{3}\varphi_{5},\varphi_{2}\varphi_{3}\varphi_{6}\rangle$.
\item ${\mathcal F}(2;2,n,n-1) \neq \emptyset$, for $n \geq 5$.
\item ${\mathcal F}(2;2,n,n-2) \neq \emptyset$, for $n \geq 6$.
\item ${\mathcal F}(2;2,(m-1)(m+2)/2,m) \neq \emptyset$, for $m \geq 4$ even.
\item ${\mathcal F}(2;2,m(m+1)/2,m) \neq \emptyset$, for $m \geq 3$ odd.
\end{enumerate}
\end{prop}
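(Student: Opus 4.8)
The whole proposition will fall out of a single translation: recasting membership in $\mathcal{F}(2;2,n,m)$ as an elementary statement about vectors over ${\mathbb F}_{2}$. The plan is to identify $H\cong{\mathbb Z}_{2}^{n}$ with ${\mathbb F}_{2}^{n+1}/\langle\mathbf{1}\rangle$ via $\varphi_{j}\leftrightarrow e_{j}$ (so that the relation $\varphi_{1}\cdots\varphi_{n+1}=1$ becomes $\mathbf{1}\equiv 0$); then a subgroup $K$ with $H/K\cong{\mathbb Z}_{2}^{m}$ is the kernel of a surjection $\theta\colon H\to{\mathbb F}_{2}^{m}$, and, setting $c_{j}:=\theta(\varphi_{j})\in{\mathbb F}_{2}^{m}$, one sees at once that $K\in\mathcal{F}(2;2,n,m)$ if and only if $c_{1},\dots,c_{n+1}$ are pairwise distinct nonzero vectors which span ${\mathbb F}_{2}^{m}$ and satisfy $c_{1}+\cdots+c_{n+1}=0$. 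Indeed, since $p=d=2$, condition (2) forbids exactly the elements $\varphi_{j}$ and $\varphi_{i}\varphi_{j}$ with $i\ne j$; and $c_{j}\ne 0\Leftrightarrow\varphi_{j}\notin K$, $c_{i}\ne c_{j}\Leftrightarrow\varphi_{i}\varphi_{j}\notin K$, spanning $\Leftrightarrow\theta$ surjective, and $\sum_{j}c_{j}=\theta(\varphi_{1}\cdots\varphi_{n+1})=0$. Two such tuples determine the same $K$ precisely when they differ by an element of ${\rm GL}_{m}({\mathbb F}_{2})$, while the ${\rm Aut}_{g}(H)\cong{\mathfrak S}_{n+1}$-action on $\mathcal{F}$ relabels the $c_{j}$; so $\mathcal{F}(2;2,n,m)/{\rm Aut}_{g}(H)$ will be identified with the set of such tuples modulo ${\mathfrak S}_{n+1}\times{\rm GL}_{m}({\mathbb F}_{2})$. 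Throughout I use the standing assumption $m\le n-1$, which excludes the trivial case $m=n$, $K=\{1\}$.

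For part (1), with this dictionary the assertion becomes: there exist $n+1$ pairwise distinct nonzero vectors of ${\mathbb F}_{2}^{3}$, spanning and summing to $0$. Since $|{\mathbb F}_{2}^{3}\setminus\{0\}|=7$ this forces $n+1\le 7$; writing $T$ for the complement of $\{c_{j}\}$ in ${\mathbb F}_{2}^{3}\setminus\{0\}$ and using $\sum_{v\ne 0}v=0=\sum_{j}c_{j}$, one gets $\sum_{v\in T}v=0$. I would then note that a zero-sum subset of ${\mathbb F}_{2}^{3}\setminus\{0\}$ has cardinality $0$, $3$, $4$ or $7$ (no single nonzero vector, and no two distinct nonzero vectors, can sum to $0$; a $5$- or $6$-element such set would have a zero-sum complement of size $2$ or $1$). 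Hence $6-n=|T|\in\{0,3,4,7\}$, giving $n\in\{6,3\}$ after discarding $n=2$ (three vectors summing to $0$ are dependent, hence cannot span ${\mathbb F}_{2}^{3}$), and $m\le n-1$ leaves $n=6$. Conversely, for $n=6$ the tuple must be all of ${\mathbb F}_{2}^{3}\setminus\{0\}$, which spans and sums to $0$; since ${\mathfrak S}_{7}$ acts simply transitively on the bijections $\{1,\dots,7\}\to{\mathbb F}_{2}^{3}\setminus\{0\}$, there is a single ${\rm Aut}_{g}(H)$-orbit, and reading off the exponents in $K=\langle\varphi_{1}\varphi_{2}\varphi_{4},\varphi_{1}\varphi_{3}\varphi_{5},\varphi_{2}\varphi_{3}\varphi_{6}\rangle$ one checks that its vectors $c_{j}$ exhaust ${\mathbb F}_{2}^{3}\setminus\{0\}$, so $K$ is a representative.

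For parts (2)–(5) it suffices to produce one valid tuple of the required length, and I would do this explicitly from the standard basis $e_{1},\dots,e_{m}$ of ${\mathbb F}_{2}^{m}$ (with $m$ and $n$ related as in the relevant part). When $n+1=m+2$ (part (2)) take $\{e_{1},\dots,e_{m}\}\cup\{e_{1}+e_{2},\ e_{3}+\cdots+e_{m}\}$: the sum vanishes, the $e_{i}$ span, and the two extra vectors have disjoint supports and are not basis vectors once $m\ge 4$. When $n+1=m+3$ (part (3)) take $\{e_{1},\dots,e_{m}\}\cup\{e_{1}+e_{2},\ e_{1}+e_{3},\ e_{1}+e_{4}+\cdots+e_{m}\}$: the three copies of $e_{1}$ contribute $e_{1}$ and the remaining terms rebuild $e_{1}+\cdots+e_{m}$, so the total is $0$, while distinctness and non-vanishing hold for $m\ge 4$. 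For parts (4) and (5) the relevant lengths are $\binom{m+1}{2}$ and $\binom{m+1}{2}+1$, and the key computation is that the sum of all nonzero vectors of ${\mathbb F}_{2}^{m}$ of weight $\le 2$ equals $m\cdot\mathbf{1}$ (that is, $0$ if $m$ is even and $\mathbf{1}$ if $m$ is odd); hence for $m$ even this entire set is a valid tuple, and since $\binom{m+1}{2}-1=(m-1)(m+2)/2$ this yields (4), while for $m$ odd one appends the vector $\mathbf{1}$, which has weight $m\ge 3$ and is therefore new, to cancel the sum, which yields (5). In every construction spanning is automatic because the $e_{i}$ are present, so the only thing that must be checked is that the handful of ``extra'' vectors be mutually distinct and distinct from the $e_{i}$; this is exactly what the stated lower bounds on $m$ (equivalently on $n$) ensure. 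I expect this small-case bookkeeping — together with the classification of zero-sum subsets of ${\mathbb F}_{2}^{3}$ and the simple transitivity argument in part (1) — to be the only delicate points, since the reformulation in the first paragraph carries all the conceptual weight and there is no single hard step beyond it.
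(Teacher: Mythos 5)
Your proof is correct, and its backbone — encoding a kernel $K$ by the tuple $c_{j}=\theta(\varphi_{j})\in{\mathbb F}_{2}^{m}$ and translating the defining conditions of ${\mathcal F}(2;2,n,m)$ into ``pairwise distinct, nonzero, spanning, zero-sum'' — is exactly the computation the paper performs with its exponent tuples $(r_{j},s_{j},t_{j})$, and your constructions for parts (2), (4) and (5) coincide with the paper's (basis vectors plus all weight-two vectors, plus the all-ones vector when $m$ is odd). Two steps genuinely differ, both in your favor. In part (1), the paper disposes of $n=4,5$ by ``direct inspection,'' whereas you rule them out uniformly via the complement argument: since $\sum_{v\neq 0}v=0$ in ${\mathbb F}_{2}^{3}$, the unused vectors form a zero-sum set, and no zero-sum subset of ${\mathbb F}_{2}^{3}\setminus\{0\}$ has size $1$ or $2$; this also makes the uniqueness for $n=6$ immediate, since the tuple must then be all of ${\mathbb F}_{2}^{3}\setminus\{0\}$ and ${\mathfrak S}_{7}$ permutes any two such bijections into one another. (Do note that discarding the $|T|=3$ case, i.e.\ $n=3$, relies on the paper's standing convention $m\le n-1$, which you correctly invoke; without it the statement of (1) would be false.) In part (3), the paper's construction partitions $\{1,\dots,n-2\}$ into three blocks of size at least $2$, which requires $n\ge 8$ and so does not literally cover $n=6,7$; your choice $\{e_{1},\dots,e_{m}\}\cup\{e_{1}+e_{2},\,e_{1}+e_{3},\,e_{1}+e_{4}+\cdots+e_{m}\}$ works for every $m\ge 4$ (i.e.\ $n\ge 6$) and thus actually repairs a small gap in the paper's argument. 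The only cosmetic omission is that, for the explicit representative $K=\langle\varphi_{1}\varphi_{2}\varphi_{4},\varphi_{1}\varphi_{3}\varphi_{5},\varphi_{2}\varphi_{3}\varphi_{6}\rangle$, you should remark that its three generators are independent (clear from the coordinates $4,5,6$), so that $K$ has order $8$ and is the full kernel rather than a proper subgroup of it; the paper is equally silent on this point.
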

\begin{proof}
Part (1): we may check by direct inspection that
${\mathcal F}(2;2,4,3)={\mathcal F}(2;2,5,3)=\emptyset$.
Assume ${\mathcal F}(2;2,n,3) \neq \emptyset$, where $n \geq 6$. Up to ${\rm Aut}_{g}(H)$, there is a surjective homomorphism $\theta:H \to {\mathbb Z}_{2}^{3}=\langle \phi_{1},\phi_{2},\phi_{3}\rangle$,
where $\phi_{j}=\theta(\varphi_{j})$, for $j=1,2,3$, and 
$\varphi_{k}, \varphi_{i} \varphi_{j} \notin K=\ker(\theta)$, where $1 \leq k \leq n+1$, and $1 \leq i < j \leq n+1$. 
Let us write, for $j=4,\ldots,n+1$,  $\theta(\varphi_{j})=\phi_{1}^{r_{j}}\phi_{2}^{s_{j}}\phi_{3}^{t_{j}}$, where $r_{j},s_{j},t_{j} \in \{0,1\}$. The condition that $\varphi_{j} \notin K$ is equivalent to have that $(r_{j},s_{j},t_{j}) \neq (0,0,0)$. The condition that $\varphi_{i}\varphi_{j} \notin K$, for $i \in \{1,2,3\}$ and $j\in\{4,\ldots,n+1\}$, is equivalent to have that $(r_{j},s_{j},t_{j}) \neq (1,0,0), (0,1,0),(0,0,1)$. In particular, $(r_{j},s_{j},t_{j}) \in \{(1,1,1), (1,1,0),(0,1,1),(1,0,1)\}$. The condition that $\varphi_{i}\varphi_{j} \notin K$, for $4 \leq i <j \leq n+1$ is equivalent to have that for different indices $4 \leq i<j \leq n+1$, $(r_{i},s_{i},t_{i}) \neq (r_{j},s_{j},t_{j})$. This ensures that $n=6$ and that, up to ${\rm Aut}_{g}(H)$, we may choose $(r_{4},s_{4},t_{4})=(1,1,0)$, $(r_{5},s_{5},t_{5})=(1,0,1)$, $(r_{6},s_{6},t_{6})=(0,1,1)$, and  $(r_{7},s_{7},t_{7})=(1,1,1)$.

Part (2): just consider the surjective homomorphism
$\theta:H \to {\mathbb Z}_{2}^{n-1}=\langle \phi_{1},\ldots,\phi_{n-1}\rangle$, defined by 
$\theta(\varphi_{k})=\phi_{k}, \; k=1,\ldots n-1,$
$\theta(\varphi_{n})=\phi_{i_{1}}\cdots \phi_{i_{l_{1}}},$ and 
$\theta(\varphi_{n+1})=\phi_{j_{1}}\cdots \phi_{j_{l_{2}}},$
where $\{i_{1},\ldots,i_{l_{1}}\}$ and $\{j_{1},\ldots,j_{l_{2}}\}$ is a disjoint partition of $\{1,\ldots,n-1\}$, with $l_{1},l_{2} \geq 2$.

Part (3): just consider the surjective homomorphism
$\theta:H \to {\mathbb Z}_{2}^{n-2}=\langle \phi_{1},\ldots,\phi_{n-2}\rangle$, defined by 
$\theta(\varphi_{k})=\phi_{k}, \; k=1,\ldots n-2,$
$\theta(\varphi_{n-1})=\phi_{i_{1}}\cdots \phi_{i_{l_{1}}},$
$\theta(\varphi_{n})=\phi_{j_{1}}\cdots \phi_{j_{l_{2}}}$ and
$\theta(\varphi_{n+1})=\phi_{k_{1}}\cdots \phi_{k_{l_{3}}},$
where $\{i_{1},\ldots,i_{l_{1}}\}$, $\{j_{1},\ldots,j_{l_{2}}\}$,
and $\{j_{1},\ldots,j_{l_{3}}\}$ is a disjoint partition of $\{1,\ldots,n-2\}$, with $l_{j} \geq 2$.

Part (4): just consider the surjective homomorphism
$\theta:H \to {\mathbb Z}_{2}^{m}=\langle \phi_{1},\ldots,\phi_{m}\rangle$, defined by 
$\theta(\varphi_{k})=\phi_{k}, \; k=1,\ldots m,$
and $\{a_{m+1},\ldots,n+1\}$ are sent to $\{\phi_{1}\phi_{2},\ldots,\phi_{m-1}\phi_{m}\}$ bijectively.

Part (5): just consider the surjective homomorphism
$\theta:H \to {\mathbb Z}_{2}^{m}=\langle \phi_{1},\ldots,\phi_{m}\rangle$, defined by 
$\theta(\varphi_{k})=\phi_{k}, \; k=1,\ldots m,$
and $\{a_{m+1},\ldots,n\}$ are sent to $\{\phi_{1}\phi_{2},\ldots,\phi_{m-1}\phi_{m}\}$ bijectively, and $\theta(\varphi_{n+1})=\phi_{1}\cdots\phi_{m}$.
\end{proof}

\begin{example}
By Proposition \ref{propo3}, for the type ${\mathcal F}(2;2,6,3)/{\rm Aut}_{g}(H)$ has cardinality one. A representative is 
$$K=\langle \varphi_{1}\varphi_{2}\varphi_{4}, \varphi_{1}\varphi_{3}\varphi_{5},\varphi_{2}\varphi_{3}\varphi_{6}\rangle.$$

This provides the $6$-dimensional family 
$$\left\{\left(S_{\Lambda}=X_{6}^{2}(\Lambda)/K, N_{\Lambda}=H/K\right): \Lambda \in \Omega_{6,2}\right\}$$
of ${\mathbb Z}_{2}^{3}$-actions of type $(2;2,6,3)$, all of them topologically conjugated.
Below, we proceed to compute algebraic equations for these pairs $(S_{\Lambda},N_{\Lambda})$.

Let us first consider the affine model $X(\Lambda) \subset {\mathbb C}^{6}$ of $X_{6}^{2}(\Lambda)$ by taking $x_{7}=1$. In this affine model, $K$ is generated by the linear transformations
$$\eta_{1}(x_{1},\ldots,x_{6})=(-x_{1},-x_{2},x_{3},-x_{4},x_{5},x_{6}),$$
$$\eta_{2}(x_{1},\ldots,x_{6})=(-x_{1},x_{2},-x_{3},x_{4},-x_{5},x_{6}),$$
$$\eta_{3}(x_{1},\ldots,x_{6})=(x_{1},-x_{2},-x_{3},x_{4},x_{5},-x_{6}).$$

A set of generators for the invariants ${\mathbb C}[x_{1},\ldots,x_{6}]^{K}$ is
$$u_{1}=x_{1}^{2}, u_{2}=x_{2}^{2}, u_{3}=x_{3}^{2}, u_{4}=x_{4}^{2}, u_{5}=x_{5}^{2}, u_{6}=x_{6}^{2}, u_{7}=x_{1}x_{2}x_{3}, u_{8}=x_{1}x_{4}x_{5},$$
$$u_{9}=x_{2}x_{4}x_{6}, u_{10}=x_{3}x_{5}x_{6}, u_{11}=x_{1}x_{2}x_{5}x_{6}, u_{12}=x_{1}x_{3}x_{4}x_{6}, u_{13}=x_{2}x_{3}x_{4}x_{5}.$$

So, if we consider the map $\Phi:{\mathbb C}^{6} \to {\mathbb C}^{13}$, defined by $\Phi(x_{1},\ldots,x_{6})=(u_{1},\ldots,u_{13})$, then $\Phi(X(\Lambda))$ is isomorphic to the affine model of $S_{\Lambda}$. The image (affine) surface $\Phi(X(\Lambda))$ is defined by the following equalities

$$u_{6}u_{13} =u_{9}u_{10},
u_{5}u_{12} = u_{8}u_{10},
u_{1}u_{2}u_{3} = u_{7}^{2},
u_{5}u_{6}u_{7} = u_{10}u_{11},
u_{4}u_{11} = u_{8}u_{9},
u_{1}u_{2}u_{5}u_{6} = u_{11}^{2},
$$
$$
u_{4}u_{6}u_{7 } =u_{9}u_{12},
u_{1}u_{2}u_{10} = u_{7}u_{11},
u_{4}u_{5}u_{7 }= u_{8}u_{13},
u_{3}u_{11} = u_{7}u_{10},
u_{1}u_{3}u_{4}u_{6} = u_{12}^{2},
u_{3}u_{6}u_{8} = u_{10}u_{12},
$$
$$
u_{3}u_{5}u_{9} = u_{10}u_{13},
u_{3}u_{5}u_{6} = u_{10}^{2},
u_{3}u_{8}u_{9} = u_{12}u_{13},
u_{2}u_{12} = u_{7}u_{9},
u_{1}u_{3}u_{9} = u_{7}u_{12},
u_{2}u_{6}u_{8} = u_{9}u_{11},
$$
$$
u_{2}u_{8}u_{10} = u_{11}u_{13},
u_{2}u_{4}u_{10} = u_{9}u_{13,}
u_{2}u_{4}u_{6} = u_{9}^{2},
u_{1}u_{4}u_{5} = u_{8}^{2},
u_{2}u_{3}u_{8} = u_{7}u_{13},
u_{2}u_{3}u_{4}u_{5} = u_{13}^{2},
$$
$$
u_{1}u_{13} = u_{7}u_{8},
u_{1}u_{4}u_{10} = u_{8}u_{12},
u_{1}u_{5}u_{9} = u_{8}u_{11},
u_{1}u_{9}u_{10} = u_{11}u_{12}
$$
$$
u_{4}=-u_{1}-u_{2}-u_{3},
u_{5}=-\lambda_{1,1}u_{1}-\lambda_{1,2}u_{2}-u_{3},
u_{6}=-\lambda_{2,1}u_{1}-\lambda_{2,2}u_{2}-u_{3},
u_{3}=-\lambda_{3,1}u_{1}-\lambda_{2,2}u_{2}-1.
$$

In this model, the group $N=\langle \phi_{1},\phi_{2},\phi_{3}\rangle$ is given by:
$$\phi_{1}: \left\{\begin{array}{ll}
u_{i} \mapsto -u_{i}, & i=7,8,11,12\\
u_{j} \mapsto u_{j}, & \mbox{otherwise}
\end{array}\right.
$$
$$\phi_{2}: \left\{\begin{array}{ll}
u_{i} \mapsto -u_{i}, & i=7,9,11,13\\
u_{j} \mapsto u_{j}, & \mbox{otherwise}
\end{array}\right.
$$
$$\phi_{3}: \left\{\begin{array}{ll}
u_{i} \mapsto -u_{i}, & i=7,10,12,13\\
u_{j} \mapsto u_{j}, & \mbox{otherwise}
\end{array}\right.
$$

\end{example}

\subsection{On topologically equivalence}
Two ${\mathbb Z}_{p}^{m}$-actions $(S_{1},N_{1})$ and $(S_{2},N_{2})$, both of type $(d;p,n)$, are topologically equivalent if there is an orientation-preserving homeomorphism $F:S_{1} \to S_{2}$ such that $F N_{1} F^{-1}=N_{2}$.  
Assume that
$S_{j}=X_{n}^{p}(\Lambda_{j})/K_{j}$, and $N_{j}=H/K_{j}$, where $\Lambda_{j} \in \Omega_{n,d}$ and $K_{j} \in {\mathcal F}(d;p,n,m)$. Then, as $X_{n}^{p}(\Lambda_{j})$ are universal covers, $F$ lifts to an orientation-preserving homeomorphism $\widetilde{F}:X_{n}^{p}(\Lambda_{1}) \to X_{n}^{p}(\Lambda_{2})$ such that $\widetilde{F} K_{1} \widetilde{F}^{-1}=K_{2}$. The homomorhism $\widetilde{F}$ induces, by the conjugation action, an element $\Phi \in {\rm Aut}_{g}(H)$, which satisfies that $\Phi(K_{1})=K_{2}$. We have obtained the following fact.

\begin{prop}
If $K_{1}, K_{2} \in {\mathcal F}(d;p,n,m)$ determine topologically equivalent ${\mathbb Z}_{p}^{m}$-actions of type $(d;p,n)$, then there exists some $\Phi \in {\rm Aut}_{g}(H)$ such that
$K_{2}=\Phi(K_{1})$.
\end{prop}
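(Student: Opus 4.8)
The plan is to lift the orientation-preserving topological equivalence $F\colon S_1\to S_2$ to the corresponding generalized Fermat covers and to extract $\Phi$ from the conjugation action of the lift. First I would invoke the classification of algebraic models \cite{GHL09,HHL23} to write, for $j=1,2$, $S_j=X_n^p(\Lambda_j)/K_j$ and $N_j=H/K_j$ with $\Lambda_j\in\Omega_{n,d}$ and $K_j\in\mathcal F(d;p,n,m)$, and denote by $\pi_{K_j}\colon X_n^p(\Lambda_j)\to S_j$ and $\pi\colon X_n^p(\Lambda_j)\to\mathbb P^d$ the two relevant coverings, so that $\pi=\pi_{N_j}\circ\pi_{K_j}$. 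By Remark~\ref{rem:sc} the variety $X_n^p(\Lambda_j)$ is simply connected (we are in the case $d\ge 2$), and since $K_j\in\mathcal F(d;p,n,m)$ acts freely (Proposition~\ref{puntosfijos}), $\pi_{K_j}$ is the universal cover of $S_j$ with deck group $K_j\cong\pi_1(S_j)$. The lifting criterion then produces a homeomorphism $\widetilde F\colon X_n^p(\Lambda_1)\to X_n^p(\Lambda_2)$ with $\pi_{K_2}\circ\widetilde F=F\circ\pi_{K_1}$, necessarily orientation-preserving since $F$ is and the $\pi_{K_j}$ are local biholomorphisms; a routine deck-transformation computation (for $\gamma\in K_1$ one checks $\pi_{K_2}\circ\widetilde F\gamma\widetilde F^{-1}=\pi_{K_2}$, so $\widetilde F\gamma\widetilde F^{-1}$ is a deck transformation of $\pi_{K_2}$) then gives $\widetilde F K_1\widetilde F^{-1}=K_2$.

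Next I would pass to the base. The hypothesis $F N_1F^{-1}=N_2$ makes $F$ descend to a homeomorphism $\bar F\colon\mathbb P^d\to\mathbb P^d$ of the quotients $\mathbb P^d=S_j/N_j$, and then $\pi\circ\widetilde F=\bar F\circ\pi$. Since the branch locus of an orbifold quotient and its decomposition into irreducible components are topologically intrinsic, $\bar F$ restricts to a homeomorphism of pairs $(\mathbb P^d,\mathcal B(\Lambda_1))\to(\mathbb P^d,\mathcal B(\Lambda_2))$ and thus induces a permutation $\tau\in\mathfrak S_{n+1}$ with $\bar F(\Sigma_i(\Lambda_1))=\Sigma_{\tau(i)}(\Lambda_2)$. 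Because $H$ is abelian, the inertia group of $\pi$ over the hyperplane $\Sigma_i(\Lambda_j)$ is unambiguously $\langle\varphi_i\rangle$, and $\varphi_i$ is singled out among its generators as the monodromy of a small positively oriented loop around $\Sigma_i(\Lambda_j)$. Conjugation by $\widetilde F$ carries the deck group $H$ of $\pi$ on $X_n^p(\Lambda_1)$ isomorphically onto the deck group $H$ of $\pi$ on $X_n^p(\Lambda_2)$, hence yields an automorphism $\Phi$ of $H$; since $\widetilde F$ is orientation-preserving and covers $\bar F$, it sends $\varphi_i$ to $\varphi_{\tau(i)}$, so $\Phi\in{\rm Aut}_g(H)$. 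As $K_1\le H$ and $\widetilde F K_1\widetilde F^{-1}=K_2$, under these identifications this reads $\Phi(K_1)=K_2$, which is the claim.

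The step I expect to be the main obstacle is the last verification, namely that $\Phi$ genuinely lies in ${\rm Aut}_g(H)$ — i.e. that it \emph{permutes} the canonical generators $\varphi_1,\dots,\varphi_{n+1}$, and does not merely permute the cyclic subgroups $\langle\varphi_i\rangle$ while raising generators to nontrivial powers. For $p>2$ this genuinely uses the orientation-preserving hypothesis: one must argue that an orientation-preserving lift of $\bar F$ matches the complex-orientation-normalised monodromy generators on the two sides (equivalently, that $\widetilde F$ respects the local inertia data distinguishing $\varphi_i$ from its other powers). This is exactly the mechanism already used in the Remark following Randell's isotopy theorem, where the analogous lift $h$ of a homeomorphism of pairs satisfies $hHh^{-1}=H$; once this is granted, the remaining steps are standard covering-space theory and carry over verbatim.
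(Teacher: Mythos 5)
Your proof follows essentially the same route as the paper's, which appears in the paragraph immediately preceding the proposition: lift $F$ to an orientation-preserving homeomorphism $\widetilde F$ of the simply connected covers $X_n^p(\Lambda_j)$, deduce $\widetilde F K_1 \widetilde F^{-1}=K_2$, and read off $\Phi$ from the conjugation action on $H$. The only caveat concerns your final step: an orientation-preserving homeomorphism of ${\mathbb P}^d$ need not preserve the orientations of the meridians, since these depend also on the orientations of the hyperplanes (cf.\ complex conjugation for $d$ even), so conjugation by $\widetilde F$ may a priori send $\varphi_i$ to $\varphi_{\tau(i)}^{-1}$ rather than $\varphi_{\tau(i)}$; but because the relation $\mu_1+\cdots+\mu_{n+1}=0$ in $H_1({\mathbb P}^d\setminus{\mathcal B})$ forces a common exponent $\pm 1$ for all $i$ and the subgroup $\Phi(K_1)$ is closed under inversion, the conclusion $K_2=\Phi(K_1)$ with $\Phi$ the permutation automorphism determined by $\tau$ holds regardless --- a subtlety the paper's own one-line assertion glosses over as well.
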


Now, assume that we have 
$K_{1}, K_{2} \in {\mathcal F}(d;p,n,m)$ such that there is some $\Phi \in {\rm Aut}_{g}(H)$ satisfying 
$K_{2}=\Phi(K_{1})$. Is such $\Phi$ induced by an orientation-preserving homeomorphism? If this is the case, then the above result will state that the number of 
topologically equivalent ${\mathbb Z}_{p}^{m}$-actions of type $(d;p,n)$ is equal to the cardinality of ${\mathcal F}(d;p,n,m)/{\rm Aut}_{g}(H)$.
This is true for $d=1$ \cite{HR}, but it is not clear for $d \geq 2$.


\section{On hyperbolicity of ${\mathbb Z}_{p}^{m}$-actions}
Let $S$ be a compact complex manifold of dimension $d \geq 2$.
The manifold $S$ is Kobayashi hyperbolic if its Kobayashi pseudometric is non-degenerate. In \cite{Brody}, Brody observed that $S$ is Kobayashi hyperbolic if and only if 
there is no non-constant holomorphic map $f:{\mathbb C} \to S$.

Assume that $S$ is a projective variety. In \cite{Demailly}, Demailly introduced an algebraic analogue for hyperbolicity. More precisely, $S$ is called 
algebraically hyperbolic if there exists a positive constant $A$ such that the degree of any curve of genus $g$ on $S$ is bounded from above by $A(g-1)$. 
In the same paper, Demailly proved that Kobayashi hyperbolicity implies algebraically hyperbolicity.  By the definition, an algebraically hyperbolic manifold does not contain genus $g \in \{0,1\}$ curves.

In \cite{BKV}, Bogomolov, Kamenova, and Verbitsky proved that, if $S$ is algebraically hyperbolic, then  ${\rm Aut}(S)$ is finite (for the Kobayashi hyperbolic case, this was proved by Kobayashi in \cite{Ko}).

\medskip

Let us consider a ${\mathbb Z}_{p}^{m}$-action $(S,N)$ of type $(d;p,n)$, where $n \geq d+1$.

\subsection{Case $m=n$ and $(d;p,n) \in \{(2;4,3),(2;2,5)\}$}
If $(d;k,n)=(2;4,3)$, then $S$ corresponds to the classical Fermat hypersurface of degree $4$ in ${\mathbb P}^{3}$ for which ${\rm Lin}(S) \cong {\mathbb Z}_{4}^{3} \rtimes {\mathfrak S}_{4}$ and ${\rm Aut}(S)$ infinite; so $S$ is not algebraically hyperbolic.
If $(d;k,n)=(2;2,5)$, then ${\rm Lin}(S)$ is a finite extension of ${\mathbb Z}_{2}^{5}$ (generically a trivial extension) and ${\rm Aut}(S)$ is infinite by results due to Shioda and Inose in \cite[Thm 5]{Shioda} (in \cite{Vinberg} Vinberg computed it for a particular case). So, again, these surfaces are not algebraically hyperbolic.

\subsection{Case $m=n$ and $(d;p,n) \notin \{(2;4,3),(2;2,5)\}$}
Let us now asume that $(d;p,n) \notin \{(2;4,3),(2;2,5)\}$, where $n \geq d+1$. In this case, we know that $S$ is a compact projective complex manifold of dimension $d$ with ${\rm Aut}(S)$ finite. 
We wonder if, in these cases, $S$ is or is not algebraically hyperbolic.

\subsection{Case $d+1 \leq m \leq n \leq 2d-1$}
In the next result, we observe that, for $n\leq 2d-1$, $S$ cannot be algebraically hyperbolic.

\begin{theo}
If  $(S,N)$ is a ${\mathbb Z}_{p}^{m}$-action of type $(d;p,n)$, where $3 \leq d+1 \leq n$. Then, in the following situations, $S$ is not algebraically hyperbolic.
\begin{enumerate}
\item $n \leq 2d-1$.
\item $n=2d$ and $p \in \{2,3\}$.
\item $n=2d+1$ and $p=2$.
\end{enumerate}
\end{theo}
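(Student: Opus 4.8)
The plan is to exhibit explicit rational or elliptic curves on $S$, which immediately shows $S$ is not algebraically hyperbolic (recall that an algebraically hyperbolic manifold contains no genus $0$ or $1$ curves). Since $(S,N)$ is biholomorphic to $(X_n^p(\Lambda)/K, H/K)$ for some $\Lambda \in \Omega_{n,d}$ and $K \in {\mathcal F}(d;p,n,m)$, and the quotient map $X_n^p(\Lambda) \to S$ is finite, it suffices to produce such curves inside $X_n^p(\Lambda)$, or inside a suitable subvariety that descends. The key observation is that $X_n^p(\Lambda)$ is a complete intersection of $n-d$ Fermat-type hypersurfaces in ${\mathbb P}^n$, and its fixed-point loci $\mathrm{Fix}(\varphi_j) = \{x_j = 0\} \cap X_n^p(\Lambda)$ are themselves generalized Fermat varieties of type $(d-1;p,n-1)$ (fact (III) in the excerpt). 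Iterating, intersecting with the vanishing of $k$ coordinates yields a generalized Fermat variety of type $(d-k;p,n-k)$ whenever this makes sense, i.e. whenever $n-k \geq d-k$, which always holds; the dimension is $d-k$.

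First I would treat case (1), $n \leq 2d-1$. Set $k = n-d \geq 1$, so that intersecting $X_n^p(\Lambda)$ with $\{x_{i_1} = \cdots = x_{i_k} = 0\}$ for an appropriate choice of $k$ indices gives a generalized Fermat variety $Y$ of type $(d-k; p, n-k) = (2d-n; p, d)$. But a generalized Fermat variety of type $(e; p, e)$ — i.e. with $n'=d'$ — is just ${\mathbb P}^e$ by the Case $n=d$ discussion. Since $n \leq 2d-1$ forces $2d - n \geq 1$, this $Y \cong {\mathbb P}^{2d-n}$ is a positive-dimensional projective space sitting inside $X_n^p(\Lambda)$, hence contains rational curves (lines). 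These rational curves map to curves of geometric genus $0$ in $S = X_n^p(\Lambda)/K$ (finite maps do not raise genus), so $S$ contains rational curves and is not algebraically hyperbolic. The only subtlety is checking that one can choose the $k$ deleted coordinates so that the resulting linear section is still in "general position" form and the resulting system is exactly a generalized Fermat variety of type $(2d-n;p,d)$; this follows by relabelling and using the description of $\mathrm{Fix}(\varphi_j)$, possibly after applying an element of ${\mathbb G}_{n,d} \cong {\mathfrak S}_{n+1}$ to normalize which hyperplanes play which role.

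For cases (2) and (3), where $n = 2d$ or $n = 2d+1$, the same construction with $k = n-d = d$ or $k=d+1$ produces, inside $X_n^p(\Lambda)$, a generalized Fermat variety of type $(0;p,d)$ or $(-1;p,d+1)$ — i.e. a $0$-dimensional set, which gives nothing. Instead, take $k = n - d - 1$ so that the section is a generalized Fermat variety of type $(1; p, n-k) = (1;p, d+1)$ when $n=2d$, or $(2;p,d+2)$ when... no: more carefully, to get a curve one wants the section to have dimension $1$, so $k = d-1$, giving type $(1; p, n-d+1)$, which is a generalized Fermat curve of type $(p, n-d+1)$. For $n = 2d$ this is a curve of type $(p, d+1)$; for $n=2d+1$ it is type $(p, d+2)$. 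The genus of a generalized Fermat curve of type $(p, N)$ is $g = 1 + \frac{p^{N-1}}{2}\bigl((N-1)(p-1) - 2\bigr)$ (from the Fuchsian signature $(0;p,\overset{N+1}{\ldots},p)$ via Riemann–Hurwitz). I would compute this genus: for type $(p,d+1)$ with $d \geq 2$, genus $0$ occurs only when $(d-1)(p-1) \leq 2$, and genus $1$ when $(d-1)(p-1)$ is slightly larger — the hypotheses $n=2d, p\in\{2,3\}$ (resp. $n=2d+1, p=2$) are precisely tuned so that $(d;p)$ forces the curve-type section to be rational or elliptic. The hard part — and the main obstacle — will be the bookkeeping: verifying that in each listed numerical case the appropriate coordinate-deletion section is genuinely a generalized Fermat curve of the claimed type (not something degenerate), and then checking the arithmetic of the genus formula to confirm $g \in \{0,1\}$ exactly in the three enumerated cases. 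One must also make sure that after passing to the quotient by $K$, the image curve still has genus $0$ or $1$; since $K$ acts (the curve need not be $K$-invariant, but its $K$-orbit is a finite union of such curves, each of which is a curve in $S$ of the same geometric genus up to the finite quotient), this is automatic because a finite morphism from a genus-$0$ or genus-$1$ curve has image of genus $0$ or $1$. I would close by noting that non-algebraically-hyperbolic implies non-Kobayashi-hyperbolic by Demailly's theorem, recalled above.
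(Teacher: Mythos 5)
There is a genuine gap: the coordinate\-/section construction you rely on never produces the low\-/genus curves you need. Slicing $X_{n}^{p}(\Lambda)$ by $k$ coordinate hyperplanes yields a generalized Fermat variety of type $(d-k;p,n-k)$, so the difference $n-d$ is \emph{preserved} under every such slice. In case (1), taking $k=n-d$ therefore gives type $(2d-n;p,d)$, whose two outer entries differ by $d-(2d-n)=n-d\geq 1$; this is never of type $(e;p,e)$ and hence is not ${\mathbb P}^{2d-n}$. Concretely, for $d=2$, $n=3$ (the Fermat surface of degree $p$ in ${\mathbb P}^{3}$) your $Y$ is the plane Fermat curve of degree $p$, of genus $(p-1)(p-2)/2\geq 3$ for $p\geq 5$, not a line. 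The same obstruction kills cases (2) and (3): with $k=d-1$ the slice is a generalized Fermat curve of type $(p,\,n-d+1)$, i.e.\ a ${\mathbb Z}_{p}^{n-d+1}$\-/cover of ${\mathbb P}^{1}$ branched over $n-d+2$ points, and its genus $1+\tfrac{p^{N-1}}{2}\bigl((N-1)(p-1)-2\bigr)$ with $N=n-d+1$ is $\geq 2$ in almost all the cases the theorem covers (e.g.\ $d=3$, $n=2d=6$, $p=2$ gives type $(2,4)$ and genus $3$). So the ``bookkeeping'' you deferred is exactly where the argument breaks: the numerical hypotheses of the theorem are not tuned to make coordinate sections rational or elliptic.

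The paper's proof uses a different, and essential, idea: instead of sections by coordinate hyperplanes (which are unions of components of the branch locus upstairs), it chooses a \emph{line} $L\subset{\mathbb P}^{d}$ in the base joining two points of maximal intersection of the branch hyperplanes, namely $\alpha=\Sigma_{1}\cap\cdots\cap\Sigma_{d}$ and a point $\beta$ in the intersection of the remaining hyperplanes. Because each branch hyperplane already passes through $\alpha$ or $\beta$, the line $L$ meets the branch divisor in only $2$, $3$, or $4$ points in cases (1), (2), (3) respectively. A connected component of $\pi_{N}^{-1}(L)$ is then an abelian cover of ${\mathbb P}^{1}$ branched over that many points, of degree $1$, $p^{2}$, or $p^{3}$, and Riemann--Hurwitz gives genus $0$, genus $\leq 1$ when $p\in\{2,3\}$, and genus $1$ when $p=2$, respectively. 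If you want to salvage your approach you would have to replace coordinate sections by the preimages of these special lines; your closing remarks (finite maps do not raise genus, and algebraic hyperbolicity excludes genus $0$ and $1$ curves) are fine and are also how the paper concludes.
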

\begin{proof}
Let $\pi_{N}:S \to {\mathbb P}^{d}$ be a Galois branched covering with deck group $N$, whose branch locus is given by the collection 
${\mathcal B}$, consisiting of the $n+1$ hyperplanes 
$\Sigma_{1}, \ldots,\Sigma_{n+1}$, that are in general position.
By the general position condition, the intersection of the planes $\Sigma_{1},\ldots,\Sigma_{d}$ consists of a unique point 
$\alpha$. 

(1) Let us first consider the case $n \leq 2d-1$.
Now, let us consider the intersection of the $n+1-d$ hyperplanes $\Sigma_{d+1},\ldots,\Sigma_{n+1}$, which is non-empty since $n+1-d \leq d$. Again, by the general position condition, we can find a point $\beta$ in that intersection that does not belong to $\Sigma_{j}$, for $j=1,\ldots,d$.
 Let $L \subset {\mathbb P}^{d}$ the line connecting $\alpha$ with $\beta$. We observe that $L \cap {\mathcal B}(\Lambda)=\{\alpha,\beta\}$. Set $L^{*}=L\setminus\{\alpha,\beta\} \cong {\mathbb C}\setminus\{0\}$. Let $\hat{L}$ be any connected component of $\pi_{N}^{-1}(L^{*})$, which is a Riemann surface that finitely covers $L^{*}$. In this way, inside $S$ we have a genus zero curve (by adding the two missing points to $\hat{L}$), so $S$ cannot be algebraically hyperbolic.
 
(2) Let us now assume that $n=2d$. We proceed similarly as in the previous case, but in this case, we consider the intersection of the $d$ hyperplanes $\Sigma_{d+1},\ldots,\Sigma_{2d}$; which is a point $\beta$. We consider the line $L \subset {\mathbb P}^{d}$ connecting $\alpha$ and $\beta$. In this case, $L$ intersects $\Sigma_{2d+1}$ in a third point $\gamma$. 
Set $L^{*}=L\setminus\{\alpha,\beta,\gamma\} \cong {\mathbb C}\setminus\{0,1\}$. Let $\hat{L}$ be any connected component of $\pi_{N}^{-1}(L^{*})$, which is a punctured Riemann surface. Moreover, $\pi_{N}:\hat{L} \to L^{*}$ is a finite abelian cover of degree $p^{2}$. By adding the missing punctures to $\hat{L}$, we obtain a closed Riemann surface $W$ such that $\pi_{N}:W \to L$ is an abelian covering, with three branch values, each of order $p$. By the Riemann-Hurwitz formula, if $p \in \{2,3\}$, then $W$ has genus $0$ or $1$.
 So, $S$ cannot be algebraically hyperbolic. 
 
 (3) The argument is similar to that in case (2), except that in this case $L$ intersects the branch locus of $\pi_{N}$ in four points. So, we will have an abelian covering $W \to L$, branched at four points, each of order $2$. This again ensures that $W$ has genus one.
\end{proof}

\begin{example}
Let us consider a generalized Fermat variety $X=X_{4}^{2}(\Lambda)$ of type $(2;2,4)$; so $n=2d$ and we are in case (2) of the previous result. In this case, the locus of fixed points $F_{1} \subset X$ of $\varphi_{1}$ has genus one, in particular, $X$ is not algebraically hyperbolic.
\end{example}

\begin{question}
Let $(S,N)$ be a ${\mathbb Z}_{p}^{m}$-action of type $(d;p,n)$, where $d \geq 2$, $n \geq 2d$ and, if $n=2d$, then $p \geq 4$, and if $n=2d+1$, then $p \geq 3$. When is $S$ algebraically hyperbolic?
\end{question}

\section*{\bf Acknowledgements}
The first author would like to thank the {\it Instituto de Matem\'atica} at Universidad de Talca for providing both a challenging and a motivating environment during a visiting position from September to November 2025, where this project started. In particular, to thank Max Leyton and Alvaro Liendo for the fruitful conversations concerning this (and other) mathematical ideas.






\end{document}